\newtheorem{theorem}{Theorem}[section]
\newtheorem{lemma}[theorem]{Lemma}
\newtheorem{proposition}[theorem]{Proposition}
\newtheorem{corollary_p}[theorem]{Corollary}
\newtheorem{definition}[theorem]{Definition}
\newtheorem*{remark}{Remark}
\theoremstyle{definition}
\tikzstyle{startstop} = [rectangle, rounded corners, 
\tikzstyle{starts} = [rectangle, rounded corners, 
\tikzstyle{process} = [rectangle, rounded corners,
\tikzstyle{arrow} = [thick,->,>=stealth]
\title{Intrinsic Stochastic Differential Equations and Extended Ito Formula on Manifolds}
\author{Sumit Suthar, Soumyendu Raha\\ Indian Institute of Science, CV Raman Road, Bengaluru- 560012}
\date{July 27, 2023}
\begin{document}
\maketitle
\begin{abstract}
A general way of representing Stochastic Differential Equations (SDEs) on smooth manifold is based on Schwartz morphism. In this manuscript we are interested in SDEs on a smooth manifold $M$ that are driven by p-dimensional Wiener process $W_t \in \mathbb{R}^p$. In terms of Schwartz morphism, such SDEs are represented by Schwartz morphism that morphs the semi-martingale $(t,W_t)\in\mathbb{R}^{p+1}$ into a semi-martingale on the manifold $M$. We show that it is possible to construct such Schwartz morphisms using special maps that we call as diffusion generators. We show that one of the ways of constructing diffusion generator is by using regular Lagrangian. Using this diffusion generator approach, we also give extended Ito formula (also known as generalized Ito formula or Ito-Wentzell's formula) for SDEs on manifold.
\end{abstract}
\bigskip
\medskip
\noindent {\bf Keywords:} Stochastic Differential Geometry, Stochastic Differential Equations on Manifolds, Ito Stochastic Differential Equations on Manifolds, Schwartz Stochastic Differential Equations, Schwartz second order geometry. 
\section{Introduction}
Stochastic Differential Equation (SDE) evolving on linear spaces is a well studied subject. Some of the popular books on this subject are \cite{arnold1974stochastic,oksendal2013stochastic}. On manifolds, however, the subject of SDEs is an active research area. Ever since K. Ito first described the coordinate transformation rules on manifolds, the subject has evolved and taken a form of what is now broadly known as Stochastic Differential Geometry. In linear spaces, Stratonovich SDE representation and Ito SDE representation are two popular ways of representing semi-martingale in form of SDEs. It is natural that there will be equivalent ways of describing SDEs on manifold. In case of Stratonovich SDEs, it is enough to consider sections of tangent bundle (vector fields) to describe the drift and the noise coefficients. However, similar statement cannot be made for Ito type SDE due to the additional drift correction term. To address this problem L. Schwartz, in \cite{schwartz1982geometrie}, introduced the idea of the \textit{second order tangent bundle}. It is because of this special construction that the study of Stochastic Differential Equations on manifolds gets a special name of Stochastic Differential Geometry. A complete account of Schwartz's second order geometry can be found in \cite{emery2012stochastic}. One of the central ideas in Schwartz's Stochastic Differential Geometry is that the stochastic differential is considered as an infinitesimal element of Schwartz's second order tangent space. These stochastic differentials are also called Intrinsic differentials or Schwartz differentials.

In the book \cite{gliklikh2011global}, Ito SDEs on manifolds are developed using the idea of Ito-bundle. As per this approach, if a manifold is equipped with a connection, then it is possible to describe an Ito SDE on the manifold as a section of the Ito-bundle. The key highlight of the book is the description of Ito Equation on manifolds using the Belopolskya-Daletskii form (section 7.3 of \cite{gliklikh2011global}), which can be exploited for numerical computations. Yet another approach is that of Stochastic development and anti-development, that can be found in chapter 2 of \cite{hsu2002stochastic} or in \cite{elworthy1982stochastic}. However, we do not consider this approach here.

In the Schwartz's approach or the so called \textit{Intrinsic SDEs}, the description of Intrinsic SDEs depend on Schwartz morphism that can morph semi-martingales from a source manifold to a semi-martingale on the target manifold. If we consider the source manifold as $\mathbb{R}^{p}$ with $(X_t)$ as a semi-martingale on $\mathbb{R}^{p}$, then the Schwartz morphism will morph the semi-martingale $X_t\in \mathbb{R}^{p}$ into a semi-martingale on some target manifold, $M$. As per Schwartz's approach, if there is a map $F:\mathbb{R}^{p}\to M$ then it is possible to obtain Schwartz morphism that morphs the semi-martingale $Y_t\in \mathbb{R}^{p}$ into the semi-martingale $F(Y_t)\in M$. However, very often we do not have the function $F:\mathbb{R}^{p}\to M$.  Hence, the problem remains in the construction of the Schwartz morphism. In this article we focus on constructing Schwartz morphisms that morphs the process $(t,W_t)\in \mathbb{R}^{p+1}$ into a semi-martingale on $M$. This has been considered as example in chapter 1 of \cite{rossi2022rough}, wherein the author has demonstrated that both Ito-bundle formulation of SDE and Stratonovich SDE can be reformulated as Schwartz's Intrinsic SDE. Conversion formulae between Ito-Stratonovich-Intrinsic SDEs are also given.
An approach based on 2-jets is presented in \cite{armstrong2018intrinsic}. This approach can be interpreted as construction of the Schwartz morphism using 2-jets of $F:\mathbb{R}^p \to M$. The authors have focused on construction of iterative schemes that can be used for numerical computation of the solution.

If $F:\mathbb{R}\times M\to N$, such that $F(t,x)$ is a semi-martingale for every $x\in M$, and $X_t$ is a semi-martingale on $M$; then the SDE representation for the semi-martingale $F(t,X_t)$ is not a straight forward application of Schwartz morphism. In Euclidean spaces the Ito SDE representation for $F(t,X_t)$ is given by the extended Ito formula \cite{kunita1981some}. We are interested in deriving \textit{extended Ito type formulae} on manifolds. From an application perspective Ito formula is used for finding stochastic characteristic of a Partial Differential Equation (PDE) \cite{constantin2008stochastic, holm2020Ito_wentzell}. 

In this article, we consider an alternate view point to describe Intrinsic SDEs that may also be obtained using Schwartz morphism that morphs the process $(t,W_t)\in \mathbb{R}^{p+1}$ into a semi-martingale on $M$. This alternate view point is based on what we call as \textit{diffusion generators}. We show that the diffusion generator can be used to construct Schwartz morphism that morphs the process $(t,W_t)\in \mathbb{R}^{p+1}$ into a semi-martingale on $M$ (section \ref{section:diffusion_generator_Schwartz morphism}). We show that diffusion generator can be constructed using regular Lagrangian (section \ref{section:construction using Lagrangians}). Finally, we use the diffusion generator approach to derive {extended Ito formulae} on manifolds (section \ref{section:last}). Our contributions from this article are discussed in section \ref{section:contributions}.

\subsection{Basic definitions and notations.}
We will denote the set of all sections of any fiber bundle $F$ by $\Gamma(F)$. The set of all smooth vector fields will be denoted by $\mathfrak{X}(M)$ and the set of all smooth function by $\mathfrak{F}(M)$. {\textit{Schwartz's second order tangent space}} at a point $x$ on an n-manifold $M$ is defined as a vector space of all differential operators of upto order 2 at point $x$.  We will denote it as $\mathfrak{D}_xM$. Locally, every second order differential operator is symmetric and is represented as $\partial^2_{ij}$. Therefore, every differential operator upto second order is locally of the form  $a^i\partial_i + b^{ij}\partial^2_{ij}$. Symmetry of the second order differential operators means that the dimension of the second order tangent space is $n + (1/2)n(n+1)$. We will call the elements of \textit{Schwartz's second order tangent space} $\mathfrak{D}_xM$ as \textbf{\textit{diffusors}} at point $x\in M$. With these definitions, it is clear that a tangent vector is also a diffusor i.e. $T_xM\subset\mathfrak{D}_xM$ $\forall$ $x\in M$.\\
For any manifolds $M$ and $N$, consider $L\in \mathfrak{D}_xM$; if $\phi:M\to N$, then the push forward of $L$ by $\phi$ at a specific point $x\in M$ is written as $\mathfrak{D}_x\phi (L)$ such that $\mathfrak{D}_x\phi:\mathfrak{D}_xM\to \mathfrak{D}_{\phi(x)}N$. Moreover, $\forall f\in \mathfrak{F}(N)$, $\mathfrak{D}_x\phi (L) [f] = L[f(\phi)] = L[\phi^*f]$. This push-forward map is linear. The fiber bundle over the manifold $M$, with Schwartz's second order tangent space $\mathfrak{D}_xM$ as the fibers, is called \textit{Schwartz's second order tangent bundle}. For brevity, we will call Schwartz's second order tangent bundle as \textbf{\textit{diffusion bundle}}, and Schwartz's second order tangent space as \textbf{\textit{diffusion space}}. A smooth \textbf{\textit{diffusor field}} $\zeta$ is defined as a smooth section of the diffusion bundle $\mathfrak{D}M$. Following our usual symbol for section of a fiber bundle, the set of all smooth diffusor field will be denoted by $\Gamma(\mathfrak{D}M)$. For $\phi:M\to N$, we will call the fiber preserving map over $\phi$, $\mathfrak{D}\phi:\mathfrak{D}M\to \mathfrak{D}N$ as the \textit{\textbf{diffusion map}}. Locally in charts $(U,\Upsilon)$ on $M$ and $(V,\chi)$ on $N$, for all $L\in \mathfrak{D}M$ such that $L|_U = a^i\partial_i + b^{ij}\partial^2_{ij}$,
\begin{equation}
\mathfrak{D}\phi \left(L\right)|_V = \left[ a^i\partial_i\phi^k + b^{ij}\partial^2_{ij}\phi^k\right] \partial_k + \left[b^{ij}\partial_i\phi^k \partial_j\phi^l\right]\partial^2_{kl}.
\end{equation}
\noindent Given $L\in \mathfrak{D}M$, consider a symmetric contravariant tensor $\hat{L}\in T^2_0M$ such that
\begin{equation}\hat{L}(df, dg) = \dfrac{1}{2}(L[fg] - fL[g] - gL[f]).\end{equation} The fact that $\hat{L}$ is indeed symmetric can be verified locally by considering $L = a^i\partial_i + b^{ij}\partial^2_{ij}$. So, locally \begin{equation}\hat{L}(df,dg) = b^{ij}\partial_if\partial_jg.\end{equation}
In other words, $\hat{L}$ can be interpreted as the symmetric part of the diffusor $L$.\\
\noindent A stochastic process $X_t$ on a manifold $M$ is said to be a \textbf{\textit{semimartingale}} if $f(X_t)$ is a semi-martingale $\forall$ $f\in \mathfrak{F}(M)$. Let $X_t$ be a continuous semi-martingale on manifold $M$. If $X_t^i$ are the local components of $X_t$ in some chart, then the local Ito differentials $dX_t^i$ and $\dfrac{1}{2}d[X_t^i,X_t^j]$ can be taken as coefficients to construct an infinitesimal diffusor
\begin{equation}
\textbf{d}X_t = (dX^i_t)\partial_i + \left(\dfrac{1}{2}d[X_t^i,X_t^j]\right)\partial^2_{ij}.
\end{equation}
The diffusor $\textbf{d}X_t$ is known as the \textbf{\textit{Schwartz differential}} of $X_t$.\\
If there are two manifolds $M$ and $N$ with $x\in M$ and $y\in N$ and there exists a linear map $J(x,y):\mathfrak{D}_xM\to \mathfrak{D}_yN$ such that $Img(J|_{T_xM})\subset T_yN$ and $\widehat{JL} = (J|_{T_xM}\otimes J|_{T_xM}) \hat{L},$ then such a map $J$ is called a \textbf{\textit{Schwartz morphism}}.\\
As per Schwartz's stochastic differential geometric approach, a Stochastic Differential Equation (SDE) for a process $X_t$ on a manifold $M$ is defined as
\begin{equation}
\mathbf{d}X_t = J(Y_t,X_t)\mathbf{d}Y_t,
\end{equation}
where $J$ is a Schwartz morphism from manifold $N$ to manifold $M$, and $Y_t$ is a given semi-martingale on the manifold $N$. This equation is known as \textit{Schwartz's SDE} or \textbf{Intrinsic SDE}.

\subsection{Background}
\label{section:pre}
In order to represent a semi-martingale on $M$ as an Intrinsic SDE, we need a semi-martingale on some manifold $N$ and a Schwartz morphism from manifold $N$ to $M$. If we consider $N$ to be a Euclidean space with $Y_t$ as a semi-martingale on $N$, then the problem remains in finding the Schwartz morphism from $N$ to $M$. The following well know theorem states that if we have a smooth map $\phi:N\to M$, then the Schwartz morphism from $N$ to $M$ is given by the diffusion map $\mathfrak{D}\phi$. Readers can refer \cite{emery2012stochastic} for the proof of the theorem.
\begin{theorem}[\cite{emery2012stochastic}]
If $\phi:N\to M$ is a smooth map, then the diffusion map $\mathfrak{D}_{x}\phi:\mathfrak{D}_xN\to \mathfrak{D}_{\phi(x)}M$ is a Schwartz operator from point $x$ $\to$ $\phi(x)$. Moreover, if $U_t$ is a semi-martingale on $N$, then the semi-martingale $\phi(U_t)$ on $M$ is given by the solution of the Schwartz Stochastic Differential Equation (SDE),
\begin{equation}
\textbf{d}X_t = \mathfrak{D}_{U_t}\phi(\textbf{d}U_t).
\end{equation}
In other words, the Schwartz differential $\textbf{d}(\phi(U_t))$ is obtained by the push forward of the Schwartz differential $\textbf{d}U_t$ by $\phi$; i.e. $\textbf{d}(\phi(U_t)) = \mathfrak{D}_{U_t}\phi(\textbf{d}U_t)$.
\end{theorem}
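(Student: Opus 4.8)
The plan is to prove the two assertions separately, in each case by a local computation in charts $(U,\Upsilon)$ about $x$ and $(V,\chi)$ about $\phi(x)$, relying on the coordinate expression for $\mathfrak{D}\phi$ recorded above. First I would check that $\mathfrak{D}_x\phi$ is a Schwartz morphism. Linearity is immediate from $\mathfrak{D}_x\phi(L)[f]=L[\phi^{*}f]$. If $L=a^{i}\partial_{i}\in T_xM$ has no second-order part, the coordinate formula returns $(a^{i}\partial_{i}\phi^{k})\partial_{k}$, which is again purely first order; hence $\mathfrak{D}_x\phi(T_xM)\subset T_{\phi(x)}N$, and the restriction $\mathfrak{D}_x\phi|_{T_xM}$ is the ordinary differential $d_x\phi$, with matrix $(\partial_i\phi^{k})$. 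It remains to verify the compatibility of symmetric parts: using $\hat L(df,dg)=b^{ij}\,\partial_i f\,\partial_j g$ and reading the second-order coefficients $b^{ij}\,\partial_i\phi^{k}\,\partial_j\phi^{l}$ off the coordinate formula for $\mathfrak{D}\phi(L)$, one gets $\widehat{\mathfrak{D}_x\phi(L)}=b^{ij}\,\partial_i\phi^{k}\,\partial_j\phi^{l}\,\partial_k\otimes\partial_l$, which is exactly $(d_x\phi\otimes d_x\phi)\hat L$. That is the defining condition for a Schwartz morphism.

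For the second assertion I would set $X_t=\phi(U_t)$ and write its components as $X_t^{k}=\phi^{k}(U_t)$. The classical (Euclidean) Itô formula applied component-wise gives
\begin{equation}
dX_t^{k}=\partial_i\phi^{k}(U_t)\,dU_t^{i}+\tfrac{1}{2}\,\partial^2_{ij}\phi^{k}(U_t)\,d[U^{i},U^{j}]_t,
\end{equation}
and, since a finite-variation process contributes nothing to a quadratic covariation,
\begin{equation}
d[X^{k},X^{l}]_t=\partial_i\phi^{k}\,\partial_j\phi^{l}\,d[U^{i},U^{j}]_t.
\end{equation}
Substituting both expressions into the definition $\textbf{d}X_t=(dX_t^{k})\partial_k+\tfrac{1}{2}\,d[X^{k},X^{l}]_t\,\partial^2_{kl}$ and comparing with $\mathfrak{D}_{U_t}\phi(\textbf{d}U_t)$ — which is obtained by inserting $a^{i}=dU_t^{i}$ and $b^{ij}=\tfrac{1}{2}\,d[U^{i},U^{j}]_t$ into the coordinate formula for $\mathfrak{D}\phi$ — shows that the two diffusors coincide term by term, giving $\textbf{d}(\phi(U_t))=\mathfrak{D}_{U_t}\phi(\textbf{d}U_t)$.

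The step I expect to be the main obstacle is not any single calculation but the passage from the local to the global statement: one must first know that the recipe $\textbf{d}X_t=(dX_t^{i})\partial_i+\tfrac{1}{2}\,d[X^{i},X^{j}]_t\,\partial^2_{ij}$ yields a well-defined, chart-independent section of $\mathfrak{D}M$ along the trajectory of $X_t$, and this independence is precisely the assertion that the Itô coefficients $dX_t^{i}$ and $\tfrac{1}{2}d[X^{i},X^{j}]_t$ transform under a change of chart like the components of a diffusor — i.e. it is itself a reformulation of Itô's formula. A clean write-up therefore either invokes this fact at the outset or derives it in passing; once it is available, the second part reduces to the substitution above. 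A secondary point worth flagging explicitly is the identity $[A,\,\cdot\,]\equiv 0$ for a bounded-variation process $A$, which is what eliminates the drift contributions from $d[X^{k},X^{l}]_t$ in the computation.
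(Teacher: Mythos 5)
Your argument is correct: the verification that $\mathfrak{D}_x\phi$ satisfies the two defining conditions of a Schwartz morphism via the coordinate formula, followed by the component-wise It\^{o} formula for $\phi^{k}(U_t)$ and the bracket identity $d[X^{k},X^{l}]_t=\partial_i\phi^{k}\,\partial_j\phi^{l}\,d[U^{i},U^{j}]_t$, is exactly the standard proof, and you rightly flag the chart-independence of $\textbf{d}X_t$ (plus, implicitly, the localization needed when $U_t$ leaves a chart) as the only genuinely delicate point. Note that the paper itself offers no proof of this theorem --- it attributes the result to Emery and defers to his book --- so there is nothing internal to compare against; your write-up matches the argument given there.
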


Although this theorem gives us a Schwartz morphism from $N$ to $M$, it depends on the map $\phi:N\to M$. The map $\phi:N\to M$ may not be available. In such cases, construction of Schwartz morphism becomes problematic. As per the following theorem from \cite{emery2012stochastic}, the Schwartz morphism can be constructed using the flow of differential equation that is defined using the linear map $S(y,x):T_yN\to T_xM$. The operator $S$ is known as \textit{Stratonovich operator}. Readers can refer to \cite{emery2012stochastic} for the proof of the following theorem.
\begin{theorem}[\cite{emery2012stochastic}]
\label{theorem:stratonovich_to_Schwartz}
For every Stratonovich operator $S(y,x):T_yN\to T_xM$, there exists a unique Schwartz operator $J(y,x):\mathfrak{D}_yN\to \mathfrak{D}_xM$, such that the Stratonovich SDE $\delta X_t = S(U_t,X_t)\circ \delta U_t$ has the same solution as that of the Intrinsic SDE $\mathbf{d}X_t = J(U_t,X_t)\mathbf{d}U_t$; such that, for smooth curves $(x(t),y(t))\in M\times N$, if $ \dot{x}(t) = S(y(t),x(t))\dot{y}(t)$, then $\dfrac{\mathbf{d}x(t)}{dt} = J(y(t),x(t)) \dfrac{\mathbf{d}y(t)}{dt}$.
\end{theorem}
Stratonovich SDEs of type
\begin{equation}
\delta X_t = V(X_t) dt + \sum_{l = 1}^p \sigma_l(X_t)\circ dW^l_t,
\end{equation}
where $V, \sigma_1, \sigma_2, ..., \sigma_p \in \mathfrak{X}(M)$; can be written in terms of Stratonovich operator $S$ from $\mathbb{R}^{p+1}$ to $M$. The Stratonovich operator that is given as $S(y,x):\mathbb{R}^{p+1}\to T_xM$ such that
\begin{equation}
S(y,x) = V(x)y_0 + \sum_{l = 1}^p \sigma_l(x) y^l,
\end{equation}
where $y = (y_0, y_1, y_2, ..., y_p)$. Now using theorem \ref{theorem:stratonovich_to_Schwartz}, we can obtain a Schwartz morphism $J$ to define the Intrinsic SDE as
\begin{equation}
\mathbf{d}X_t = J((t,W_t),X_t) \mathbf{d}(t,W_t),
\end{equation}
where $W_t = (W^1_t, ..., W^p_t)$ is a p-dimensional Wiener process. However, we wish to define Schwartz morphism without explicitly depending on the underlying Stratonovich morphism. Moreover, theorem \ref{theorem:stratonovich_to_Schwartz} does not tell us if there will be an underlying Stratonovich operator for every Schwartz morphism.

Let us consider an arbitrary Schwartz morphism $\beta$ from $\mathbb{R}^{p+1}$ to $M$. We know that, locally in chart $(U,\chi)$,
\[\beta(y,x) L|_U = \left( f^i_l(x) a^l + g^i_{lm}(x)b^{lm}\right) \partial_i + \left(f^i_l(x) f^j_m(x) b^{lm} \right) \partial^2_{ij},\]
for every $L \in \mathfrak{D}_y\mathbb{R}^{p+1}$ such that $L = a^l\partial_l + b^{lm}\partial^2_{lm}$ and the indices $l,m \in \{0,1, 2, ..., p\}$. Here $f^i_l, g^i_{lm}$ are local coefficients of $\beta$.  With this Schwartz morphism $\beta$, if we consider the SDE
\[\mathbf{d}X_t = \beta((t,W_t),X_t) \mathbf{d}(t,W_t),\]
then we find that
\begin{equation}
\label{equation: 2.intermediate}
\mathbf{d}X_t|_U =\left[ f^i_0(X_t)\partial_i +\dfrac{1}{2}\left( \sum_{l=1}^p g^i_{ll}(X_t)\partial_i +(f^i_l(X_t) f^j_l(X_t))\partial^2_{ij}\right)\right] dt + \sum_{l=1}^p(f^i_l(X_t) \partial_i)dW^l_t.
\end{equation}
Note that the term in the parenthesis is a diffusor. Therefore if we consider vector fields $V,\sigma_1, ..., \sigma_p\in \mathfrak{X}(M)$, and diffusor field $\alpha\in \Gamma(\mathfrak{D}M)$; then the following equation,
\begin{equation}
\label{equation: 2.intermediate_global}
\mathbf{d}X_t = Vdt +\dfrac{1}{2}\alpha dt + \sum_{l=1}^p\sigma_ldW^l_t,
\end{equation}
is a co-ordinate invariant representation of equation \eqref{equation: 2.intermediate} if the diffusor field $\alpha$ is such that 
\begin{equation}
\widehat{\alpha} = \sum_{l=1}^p\sigma_l \otimes \sigma_l.
\end{equation}
SDEs in form of equation \eqref{equation: 2.intermediate_global} (i.e. the SDEs on manifolds that are driven by Wiener process) have been well known and has been studied several times. The problem lies in obtaining the diffusor field $\alpha$. One of the ways of obtaining the diffusor field $\alpha$ in equation \eqref{equation: 2.intermediate_global} is by using theorem \ref{theorem:stratonovich_to_Schwartz}. Based on this theorem, if we consider Stratonovich differential equation
\[\partial X_t = V(X_t) dt + \sum_{l = 1}^p\sigma_l(X_t) \circ dW^l_t,\]
then it is easy to verify that the equivalent Intrinsic SDE is 
\[\mathbf{d}X_t = \left[V(X_t) + \dfrac{1}{2}\alpha_S(X_t) \right] dt + \sum_{l = 1}^p\sigma_l(X_t) \circ dW^l_t,\]
where $\alpha_S$ is locally given as
\[\alpha_S|_U = \sum_{l = 1}^pd\sigma^i_l\cdot\sigma_l\dfrac{\partial}{\partial x^i} + \sum_{l = 1}^p\sigma^i_l \sigma^j_l\dfrac{\partial^2}{\partial x^i \partial x^j}.\]
But, as already mentioned, we are not interested in Stratonovich SDEs.

From \cite{emery2006two}, we know that the following short-exact equation is valid fiber-wise on every point $x\in M$.
\[
\begin{tikzcd}
0 \arrow[r] & T_xM \arrow[r, "i"] & \mathfrak{D}_xM \arrow[r, "\widehat{\cdot}"] & T_xM\odot T_xM \arrow[r] & 0.
\end{tikzcd}
\]
This implies that there exists an isomorphism $J_x:\mathfrak{D}_xM\to T_xM \oplus (T_xM\odot T_xM)$. Moreover, if we represent a linear map from $\mathfrak{D}_xM$ to $T_xM$ as $Q_x$, then 
\begin{equation}\label{equation: J_connection}
J_x(\cdot) = (Q_x(\cdot), \widehat{\cdot}).
\end{equation}

In chapter 7 of \cite{emery2012stochastic}, it has been demonstrated that such linear maps $Q_x$ can be uniquely identified with a connection on the manifold. Therefore, from equation \eqref{equation: J_connection}, it is possible to construct the isomorphism $J_x$ using a connection on the manifold. Due to the isomorphism $J_x$, a diffusor field $\lambda$ can be obtained from vector field $V$ by considering
\[\lambda_x = J_x^{-1}((V_x,V_x\otimes V_x)),\] where $Q_x:\mathfrak{D}_xM\to T_xM$ is the linear map corresponding to the given connection. Moreover, if $\Gamma$ is the Christoffel symbol for the connection, the diffusor field $\lambda$ is locally given as 
\[\lambda|_U = -\Gamma^i_{jk}V^jV^k\partial_i + V^iV^j\partial_{ij}.\]

Therefore, we find that there exists a diffusor field equation $\alpha_I$ such that it is locally given as
\[\alpha_I|_U = \sum_{l=1}^p -\Gamma^i_{jk}\sigma^j_l\sigma^k_l\partial_i + \sum_{l=1}^p\sigma^i_l \sigma^j_l\partial_{ij}.\]
Since $\widehat{\alpha_I} = \sum_{l=1}^p\sigma_l \otimes \sigma_l$, the following SDE gives us a special case of equation \eqref{equation: 2.intermediate_global},
\begin{equation}
\label{equation: 2.intermediate_global_Ito}
\mathbf{d}X_t = Vdt +\dfrac{1}{2}\alpha_I dt + \sum_{l=1}^p\sigma_ldW^l_t.
\end{equation}
Such equations are called Ito SDEs on manifolds. In order to differentiate the Ito SDEs on manifolds from the Ito SDEs on Euclidean spaces, we will call the Ito SDEs on manifolds as \textit{standard Ito SDEs}. Standard Ito SDEs require a connection. This idea of using the notion of connection to construct the diffusor field $\alpha$ in equation \eqref{equation: 2.intermediate_global} was originally presented by Meyer in \cite{meyer1981geometrie} (in French). As English speakers, we find chapter 6 and chapter 7 of \cite{emery2012stochastic} as a useful reference. A modern approach that uses the idea of connections and Ito-bundle to derive equation \eqref{equation: 2.intermediate_global}, can be found in \cite{gliklikh2011global}.

Our interest in this article is in considering ways of constructing the diffusor field $\alpha$ in equation \eqref{equation: 2.intermediate_global}, without using the notion of connection and without depending on the underling Stratonovich morphism. To this end, we consider an alternate view point.

\bigskip

\noindent \textbf{Alternate view point of equation \eqref{equation: 2.intermediate_global}:}  If the diffusor field $\alpha$ is considered to be a sum of diffusor fields $\alpha_l$ (i.e. $\alpha = \sum_{l=1}^p \alpha_l$), such that for each $\alpha_l\in \Gamma(\mathfrak{D}M)$
\begin{equation}
\widehat{\alpha_l} = \sigma_l \otimes \sigma_l,
\end{equation}
then the equation \eqref{equation: 2.intermediate_global} converts into
\begin{equation}
\label{equation: 2.intermediate_global_alternate}
\mathbf{d}X_t = Vdt + \sum_{l=1}^p \left(\dfrac{1}{2}\alpha_l dt + \sigma_ldW^l_t\right).
\end{equation}
As each $\alpha_l$ has the property that $\widehat{\alpha_l} = \sigma_l \otimes \sigma_l$, each diffusor field $\alpha_l$ is associated with the vector field $\sigma_l$.

Due to this property of the diffusor field $\alpha_l$, which requires the noise vector field $\sigma_l$, it is natural to ask if we can construct diffusors from given vectors. In order to achieve this, we need a function that maps from tangent space $T_xM$ to the diffusion space $\mathfrak{D}_xM$. In other words, we need a fiber preserving map from $TM$ to $\mathfrak{D}M$ over identity.

Therefore, if we have a fiber preserving map $G: TM \to \mathfrak{D}M$ over identity, then the diffusor fields $\alpha_l$ can be obtained using the vector fields $\sigma_l$ as 
\[\alpha_l = G(\sigma_l).\]
As we have to ensure that $\widehat{\alpha_l} = \sigma_l \otimes \sigma_l$, we must construct the function $G$ such that 
\[\widehat{G(v)} = v \otimes v\]
for all $v\in TM$.
Using such function $G$, we can rewrite equation \eqref{equation: 2.intermediate_global} as
\begin{equation}
\label{equation: SDE_on mani}
\mathbf{d}X_t = \left[V(X_t) + \dfrac{1}{2} \sum_{l = 1}^p G\circ\sigma_l(X_t)\right]dt + \sum_{l=1}^p \sigma_l(X_t) dW^l_t.
\end{equation}
We have already seen an example of such function $G$ in case of standard Ito SDEs, wherein
\[G(v)|_U =  -\Gamma^i_{jk}v^jv^k\partial_i + v^iv^j\partial_{ij}.\]
This was based on construction of linear map $Q_x:\mathfrak{D}_xM\to T_xM$ from the given connection and on the isomorphism $J_x:\mathfrak{D}_xM\to T_xM \oplus (T_xM\odot T_xM)$, such that
\[G(v_x) = J_x^{-1}((v_x,v_x\otimes v_x)) = (Q_x, \widehat{\cdot})^{-1}((v_x,v_x\otimes v_x)).\]
However, this is just one of the examples. For general cases, one does not need to consider the isomorphism $J_x$ at all. One can simply consider equation \eqref{equation: SDE_on mani} and the special functions $G$ given by the following definition.
\begin{definition}
\label{def: diffusion generator}
We will call a fiber preserving map $G:TM\to \mathfrak{D}M$ over identity as a \textbf{\textit{diffusion generator}} if $\forall$ $Y\in TM$, $\widehat{G(Y)} = Y\otimes Y$. We will denote the set of all diffusion generators on $M$ by $\mathcal{G}(M)$.
\end{definition}
Therefore, if we are given an isomorphism $I_x = J^{-1}_x: T_xM\oplus (T_xM\odot T_xM)\to \mathfrak{D}_xM$, then a diffusion generator $G\in \mathcal{G}(M)$ can be identified with a map $A_x:T_xM\to T_xM$ such that
\[G(v_x) = I_x((A(v_x),v_x\otimes v_x)).\]
The map $A:TM \to TM$ need not be a bijection or a linear map.
\begin{remark}
From \cite{gliklikh2011global}, we know that the drift term of equation \eqref{equation: 2.intermediate_global}, or equivalently equation \eqref{equation: SDE_on mani}, is known as the \textit{generator} of the process $X_t\in M$. Therefore, if we consider the equation
\[\mathbf{d}Z_t = \left[\dfrac{1}{2}(G\circ\sigma)(Z_t)\right]dt + \sigma(Z_t) dW_t,\]
then the generator for $Z_t$ is $\dfrac{1}{2}G\circ\sigma$. Therefore, diffusion generator can be uniquely identified with generator of a semi-martingale driven by one dimensional Wiener process. It is for this reason that we use the word \textit{diffusion generator} to distinguish it from \textit{generator}.
\end{remark}

\subsection{Our contributions.}\label{section:contributions}
In this article we consider Stochastic Differential Equations (SDEs) from the view point of diffusion generators. As equation \eqref{equation: SDE_on mani} is just another way of looking at equation \eqref{equation: 2.intermediate_global}, in this article we are interested in using the diffusion generator approach to construct the diffusor field $\alpha$ in equation \eqref{equation: 2.intermediate_global}. To this end, we are interested in the construction of diffusion generators. In particular, we are interested in construction of diffusion generator using a regular Lagrangian. As discussed in the introductory section, we use this approach to derive generalized / extended Ito / Ito-Wentzell type formula on manifolds. Our contributions from this article are
\begin{itemize}
\item description of SDEs using regular Lagrangians, and
\item derivation of generalized / extended Ito / Ito-Wentzell type formula on manifolds from the view point of diffusion generators.
\end{itemize}

\section{Intrinsic Stochastic Differential Equations using diffusion generator.}
\label{section:main}
Before formally defining what we mean by Intrinsic SDE using diffusion generator approach, we would like to see if equation \eqref{equation: SDE_on mani} has a unique strong solution that is adapted to the filteration generated by the Wiener process $W_t\in \mathbb{R}^p$. This should not come as a surprise, because we already know that equation \eqref{equation: SDE_on mani} is just reformulation of equation \eqref{equation: 2.intermediate_global}, and that equation \eqref{equation: 2.intermediate_global} has a unique local strong solution when the coefficients are smooth. This can be stated as follows.
\begin{proposition}
Given a smooth diffusion generator $G\in \mathcal{G}(M)$, and smooth vector fields \[V, \sigma_1, \sigma_2, ..., \sigma_p\in \mathfrak{X}(M),\]
the Intrinsic SDE
\begin{equation}
\mathbf{d}X_t = \left[V(X_t) + \dfrac{1}{2} \sum_{l = 1}^p G\circ\sigma_l(X_t)\right]dt + \sum_{l=1}^p \sigma_l(X_t) dW^l_t.
\tag{\ref{equation: SDE_on mani}}
\end{equation}
has a unique local strong solution i.e. there exists a semi-martingale $X_t\in M$ that satisfies the equation locally in time, for any initial condition $X_0\in M$ in the strong sense.
\end{proposition}
\begin{proof}
Suppose for vector field $\sigma_l\in \mathfrak{X}(M)$, locally in chart $(U,\chi)$ with coordinates $(x^1, x^2, ..., x^n)$, the diffusor $\alpha_l = G(\sigma_l)$ is given as
$\tilde{\alpha_l} = G(\sigma_l)|_U = a^i_l\dfrac{\partial}{\partial x^i} + \sigma^i_l \sigma^j_l \dfrac{\partial}{\partial x^i \partial x^j}$. In chart $(U,\chi)$, the left hand side of equation \eqref{equation: SDE_on mani} is given by
\begin{equation}
\mathbf{d}X_t|_U = dX^i_t\dfrac{\partial}{\partial x^i} +\dfrac{1}{2} d[X^i_t,X^j_t]\dfrac{\partial^2}{\partial x^i \partial x^j},
\end{equation}
where $X^i_t = \chi^i(X_t)$. Therefore, in chart $(U,\chi)$, we get the Ito SDEs,
\begin{equation}
dX^i_t = (V^i+ \dfrac{1}{2}\sum_{l = 1}^p a^i_l)dt + \sigma^i_l dW^l_t
\end{equation}
and
\begin{equation}
d[X^i_t,X^j_t] = \sigma^i_l(X_t)\sigma^j_l(X_t) dt.
\end{equation}
The smoothness of the diffusors $\alpha_l$ follows from the smoothness of the map $G$ and the smoothness of the vector fields $\sigma_l$. As the Ito SDE has a unique local solution when the co-efficients are smooth, we can conclude that if equation \eqref{equation: SDE_on mani} is coordinate invariant then there exists a unique semi-martingale $X_t$ that satisfies equation \eqref{equation: SDE_on mani} locally in time. Readers interested in the proof that equation \eqref{equation: SDE_on mani} is coordinate invariant, can refer to Appendix \ref{appendix: proof of co-ordinate invariance.}.
\end{proof}
This proposition demonstrates that, using diffusion generator, it is possible to define SDEs on manifolds without explicitly mentioning the Schwartz morphism. This allows us to give the following definition.
\begin{definition}
We define \textbf{\textit{Intrinsic Stochastic Differential Equation}} using diffusion generator as a 3-tuple $(V,\{\sigma_i\}, G)$, where $V\in \mathfrak{X}(M)$, $\sigma_i\in \mathfrak{X}(M)$ for $i \in \{1, 2, ..., p\}$, and $G\in \mathcal{G}(M)$. The Intrinsic SDE $(V,\{\sigma_i\}, G)$ can also be written in form of equation \eqref{equation: SDE_on mani}. A \textbf{\textit{solution}} of the SDE $(V,\{\sigma_i\}, G)$ is a semi-martingale $X_t\in M$ that satisfies equation \eqref{equation: SDE_on mani} in all the charts in the strong sense.
\end{definition}

\subsection{Diffusion generator and Schwartz morphism.}\label{section:diffusion_generator_Schwartz morphism}
In the following lemma, we prove that it is possible to construct Schwartz morphism using $V\in \mathfrak{X}(M)$, $\sigma_i\in \mathfrak{X}(M)$ for $i \in \{1, 2, ..., p\}$, and diffusor generator $G\in \mathcal{G}(M)$. However, as we shall see latter, the converse is not true.
\begin{lemma}
\label{lemma: S eq S}
For every vector fields $V\in \mathfrak{X}(M)$, $\sigma_i\in \mathfrak{X}(M)$ for $i \in \{1, 2, ..., p\}$, and diffusor generator $G\in \mathcal{G}(M)$, there exists a Schwartz morphism $\beta(y,x):\mathfrak{D}_y\mathbb{R}^{p+1} \to \mathfrak{D}_xM$ such that \[\mathbf{d}X_t = \beta((t,W_t),X_t) \mathbf{d}(t,W_t) = \left[V + \dfrac{1}{2} \sum_{l = 1}^p G(\sigma_l)\right]dt + \sum_{l=1}^p \sigma_l dW^l_t.\]
\end{lemma}
\begin{proof}
Given vector fields $V\in \mathfrak{X}(M)$, $\sigma_i\in \mathfrak{X}(M)$ for $i \in \{1, 2, ..., p\}$, and diffusor generator $G\in \mathcal{G}(M)$ that is locally given as
\[G(\sigma)|_U = g^i(\sigma)\partial_i + \sigma^i\sigma^j\partial^2_{ij};\]
we can consider the Schwartz morphism $\beta(y,x):\mathfrak{D}_y\mathbb{R}^{p+1} \to \mathfrak{D}_xM$ such that locally it is given as
\[\beta(y,x) L|_U = \left( V^i(x) a^0 + \sigma^i_l(x) a^l + \sum_{n = 1}^p \dfrac{1}{p} g^i(\sigma_n(x))\delta_{lm}b^{lm}\right) \partial_i + \left(\sigma^i_l(x) \sigma^j_m(x) b^{lm} \right) \partial^2_{ij},\]
for every $L \in \mathfrak{D}_y\mathbb{R}^{p+1}$ such that $L = a^k\partial_k + b^{kz}\partial^2_{kz}$ and the indices $k,z \in \{0,1, 2, ..., p\}$ and $l,m \in \{1, 2, ..., p\}$.
Clearly, this Schwartz morphism is constructed using the local component of the vector fields and the diffusion generator. It can be verified that $\beta((t,W_t),X_t) \mathbf{d}(t,W_t)$ is locally given as
\[\beta((t,W_t),X_t) \mathbf{d}(t,W_t)|_U = \left[V^i\partial_i + \dfrac{1}{2} \sum_{l = 1}^p g^i(\sigma_l)\partial_i + \dfrac{1}{2} \sum_{l = 1}^p\sigma^i_l\sigma^j_l\partial^2_{ij}\right]dt + \sum_{l=1}^p \sigma^i_l\partial_i dW^l_t,\]
\[ = \left[V^i\partial_i + \dfrac{1}{2} \sum_{l = 1}^p G(\sigma_l)|_U\right]dt + \sum_{l=1}^p \sigma^i_l\partial_i dW^l_t.\]
\end{proof}
However, the converse of lemma \ref{lemma: S eq S} is not true. Suppose $X_t$ is a semi-martingale that satisfies
\[\mathbf{d}X_t = \beta((t,W_t),X_t) \mathbf{d}(t,W_t)\]
for some arbitrary Schwartz morphism $\beta(y,x):\mathfrak{D}_y\mathbb{R}^{p+1} \to \mathfrak{D}_xM$. Following the discussion in section \ref{section:pre}, from equation \eqref{equation: 2.intermediate} we know that locally,
\begin{equation}
\mathbf{d}X_t|_U =\left[ f^i_0(X_t)\partial_i +\dfrac{1}{2}\left( \sum_{l=1}^p g^i_{ll}(X_t)\partial_i +(f^i_l(X_t) f^j_l(X_t))\partial^2_{ij}\right)\right] dt + \sum_{l=1}^p(f^i_l(X_t) \partial_i)dW^l_t,
\end{equation}
where $f^i_l, g^i_{lm}$ are local coefficients of $\beta$. If we consider diffusion generators $G_n\in \mathcal{G}(M)$ such that they are locally given as
\[G_n(v)|_U = h^i_{n}(v)\partial_i +(v^i v^j)\partial^2_{ij},\]
for all $v\in TM$, where $h^i_n= g^i_{nn}\circ \tau_M$;
then
\[\mathbf{d}X_t|_U = \left[ f^i_0\partial_i + \dfrac{1}{2}\left( \sum_{l=1}^p G_l(f^i_l\partial_i)|_U\right)\right] dt + \sum_{l=1}^p f^i_l\partial_i dW^l_t.\]
Hence, we need multiple diffusion generators to retrieve the Schwartz morphism. However, we do not consider SDEs with multiple diffusion generators. 
\subsection{Construction of diffusion generators using flow of differential equations.}
\label{section:construction_using_flow}
As a consequence of lemma \ref{lemma: S eq S}, the construction of a Schwartz morphism boils down to the construction of a diffusion generator. In section \ref{section:pre}, we have seen that a diffusion generator $G\in \mathcal{G}(M)$ can be identified with a map $A_x:T_xM\to T_xM$ for the given isomorphism $I_x:T_xM\oplus (T_xM\odot T_xM)\to \mathfrak{D}_xM$ such that
\[G(\sigma_x) = I_x(A_x(\sigma_x), \sigma_x\otimes\sigma_x)\]
for all $\sigma_x\in T_xM$. Therefore, to construct a diffusion generator, one needs the isomorphism $I_x:\mathfrak{D}_xM\to T_xM \oplus (T_xM\odot T_xM)$ and the map $A:TM\to TM$ to define a diffusion generator. The map $A:TM\to TM$ can even be an identity map or some linear map that is easy to construct. This implies that the construction of diffusion generator depends on the given isomorphism $I_x$. Unfortunately, other than the case of manifolds with connection, which gives us the standard Ito SDEs, we do not know of any other way of obtaining this isomorphism. Hence, we do not take this route to construct the diffusion generator.

In this section we will demonstrate that it is possible to obtain diffusion generator using the flow of first order or second order differential equation. For this purpose we consider smooth curve $c(t)$ with the diffusor
\[\dfrac{\mathbf{d}c}{dt} = \mathfrak{Dc}\dfrac{d^2}{dt^2}.\]
We know that in chart $(U,\chi)$,
\begin{equation}
\dfrac{\mathbf{d}c}{dt}\Big\vert_{U} = \ddot{c}^i\partial_i + \dot{c}^i\dot{c}^j\partial^2_{ij}.
\end{equation}
Moreover, we find that $\widehat{\dfrac{\mathbf{d}c}{dt}} = \dot{c}\otimes \dot{c}$. This means that any function that maps the vector $\dot{c}$ to the diffusor $\mathbf{d}c/dt$ should give us the diffusion generator. This approach of constructing diffusion generator using smooth curves is similar to the 2-jet approach discussed in \cite{armstrong2018intrinsic}. This is because both the approaches are fundamentally based on the idea of considering upto second derivative of the curve. In this section we will only consider curves obtained by flow of first order and second order differential equations.

\subsubsection{Construction of diffusion generator using flow of first order differential equations and its relation to Stratonovich SDE.}
\begin{lemma}
\label{lemma:coordinate_invariance_of_diffusor_field}
For every vector field $\sigma\in \mathfrak{X}(M)$ there exists a unique diffusor field $\alpha\in \Gamma(\mathfrak{D}M)$ such that locally, in chart $(U,\chi)$ with coordinates $(x^1, x^2, ..., x^n)$, \begin{equation}\tilde{\alpha} = \alpha|_U = d\sigma^i\cdot\sigma\dfrac{\partial}{\partial x^i} + \sigma^i \sigma^j\dfrac{\partial^2}{\partial x^i \partial x^j},\end{equation}
where $\sigma^i = d\chi^i\cdot \sigma$.
\end{lemma}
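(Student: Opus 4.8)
The plan is to produce the section $\alpha$ pointwise from the flow of $\sigma$, so that its intrinsic meaning is transparent, and then simply read off the stated local formula; uniqueness is then automatic, since a section of a vector bundle is determined by its expressions in charts. Concretely, for $x\in M$ let $c_x$ denote the integral curve of $\sigma$ with $c_x(0)=x$, which exists on some interval about $0$ (the constant curve if $\sigma(x)=0$), and define
\[\alpha_x \;:=\; \frac{\mathbf{d}c_x}{dt}\Big|_{t=0}\;\in\;\mathfrak{D}_xM.\]
The key observation making this well posed is that for any $f\in\mathcal{F}(M)$ and any smooth curve $c$ one has $\dfrac{\mathbf{d}c}{dt}\big|_{t_0}[f]=\dfrac{d^2}{dt^2}\big|_{t_0}(f\circ c)$, which is visibly chart-independent and depends only on the $2$-jet of $c$ at $t_0$; hence $x\mapsto\alpha_x$ is a genuine assignment of a diffusor to each point, not merely a collection of chart-dependent symbols.

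Next I would compute $\alpha$ in a chart $(U,\chi)$. Writing $\sigma^i=d\chi^i\cdot\sigma$, the integral-curve equation gives $\dot c_x^{\,i}(0)=\sigma^i(x)$ and $\ddot c_x^{\,i}(0)=\frac{d}{dt}\big|_{0}\,\sigma^i(c_x(t))=\partial_j\sigma^i(x)\,\sigma^j(x)=(d\sigma^i\cdot\sigma)(x)$. Substituting into $\frac{\mathbf{d}c}{dt}\big|_U=\ddot c^{\,i}\partial_i+\dot c^{\,i}\dot c^{\,j}\partial^2_{ij}$ yields exactly $\alpha|_U=(d\sigma^i\cdot\sigma)\,\partial_i+\sigma^i\sigma^j\,\partial^2_{ij}$, the claimed expression, whose components are polynomial in $\sigma^i$ and $\partial_j\sigma^i$ and hence smooth; equivalently, smoothness follows from smooth dependence of ODE solutions on initial data. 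Thus $\alpha\in\Gamma(\mathfrak{D}M)$. Any other diffusor field with the same local form agrees with $\alpha$ on every chart of an atlas, hence equals $\alpha$, which gives both existence and uniqueness.

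Alternatively — and this is really the same computation unpacked — one can verify directly that the prescription transforms correctly under a change of chart: for overlapping charts with transition map $\Psi=\Upsilon\circ\chi^{-1}$, apply the diffusion-map transition formula $\mathfrak{D}\phi(L)|_V=[a^i\partial_i\phi^k+b^{ij}\partial^2_{ij}\phi^k]\partial_k+[b^{ij}\partial_i\phi^k\partial_j\phi^l]\partial^2_{kl}$ with $a^i=d\sigma^i\cdot\sigma$ and $b^{ij}=\sigma^i\sigma^j$. The second-order part becomes $\sigma^i\sigma^j\,\partial_i\Psi^k\,\partial_j\Psi^l=\hat\sigma^k\hat\sigma^l$ by the transformation rule for vector-field components, and the first-order part becomes $\sigma^j(\partial_j\sigma^i)\partial_i\Psi^k+\sigma^i\sigma^j\partial^2_{ij}\Psi^k$, which equals $d\hat\sigma^k\cdot\hat\sigma$ after differentiating $\hat\sigma^k=(\partial_i\Psi^k)\sigma^i$ and using $\hat\sigma^m\,(\partial x^j/\partial y^m)=\sigma^j$. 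I do not anticipate any real obstacle: the only point needing care in the direct route is the bookkeeping with the inverse Jacobian and with the second derivatives of $\Psi$, which is precisely what the integral-curve argument organizes away. I would therefore present the flow argument as the proof and relegate the transition-rule check to a remark.
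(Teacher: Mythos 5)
Your proof is correct, but it takes a different route from the paper's. The paper proves the lemma by brute force: it writes down the candidate expression in a second chart $(U,\Upsilon)$, expands $d(d\Upsilon^i\cdot\sigma)\cdot\sigma$ by the product rule into $\partial^2_{jl}\Upsilon^i\,\sigma^j\sigma^l+\partial_j\Upsilon^i\,\partial_l\sigma^j\,\sigma^l$, and checks that the result coincides with $\mathfrak{D}\Upsilon(\tilde\alpha)$ computed from the pushforward formula --- i.e.\ exactly the ``alternative'' you relegate to a remark. Your primary argument instead builds $\alpha_x$ intrinsically as $\tfrac{\mathbf{d}c_x}{dt}\big\vert_{t=0}$ along the integral curve of $\sigma$, using that $\tfrac{\mathbf{d}c}{dt}[f]=\tfrac{d^2}{dt^2}(f\circ c)$ is manifestly chart-free, and then reads off the local components from $\dot c^i(0)=\sigma^i$ and $\ddot c^i(0)=\partial_j\sigma^i\,\sigma^j$. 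This is sound and arguably more illuminating: it explains where the formula comes from and organizes away the Jacobian bookkeeping, and it anticipates the paper's own Lemma~3.2 (the Stratonovich diffusion generator), where essentially this flow argument reappears. What the paper's computation buys in exchange is self-containedness --- it needs no existence theorem for integral curves and verifies the transformation law directly, which is the pattern reused in the proof of Proposition~3.1. Your treatment of smoothness and uniqueness is fine; the paper handles uniqueness only implicitly for the same trivial reason you give.
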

\begin{proof}
To prove that $\alpha\in \Gamma(\mathfrak{D}M)$ is a diffusor field, we need to prove that it is coordinate invariant.
This can be achieved by considering another chart $(U,\Upsilon)$ with coordinates $(z^1,z^2, ..., z^n)$.
In chart $(U,\Upsilon)$,
\begin{equation}\alpha ' = \alpha|_U = d\breve{\sigma}^i\cdot{\sigma}\dfrac{\partial}{\partial z^i} + \breve{\sigma}^i \breve{\sigma}^j\dfrac{\partial^2}{\partial z^i \partial z^j,}\end{equation}
where $\breve{\sigma}^i = d\Upsilon^i\cdot{\sigma}$. The smoothness of the diffusor field in the chart follows from the smoothness of the vector fields.
\begin{subequations}
\begin{equation}\implies \alpha '= d(d\Upsilon^i\cdot {\sigma})\cdot{\sigma}\dfrac{\partial}{\partial z^i} + (d\Upsilon^i\cdot {\sigma}) (d\Upsilon^j\cdot {\sigma})\dfrac{\partial^2}{\partial z^i \partial z^j}\end{equation}
\begin{equation}= \left(\dfrac{\partial}{\partial x^l}\left(\dfrac{\partial\Upsilon^i}{\partial x^j}\sigma^j\right)\sigma^l\right)\dfrac{\partial}{\partial z^i} + \left(\dfrac{\partial\Upsilon^i}{\partial x^l}\sigma^l \dfrac{\partial\Upsilon^j}{\partial x^k}\sigma^k\right)\dfrac{\partial^2}{\partial z^i \partial z^j}\end{equation}
\begin{equation}= \left(\dfrac{\partial^2\Upsilon^i}{\partial x^j\partial x^l}\sigma^j\sigma^l + \dfrac{\partial\Upsilon^i}{\partial x^j}\dfrac{\partial \sigma^j}{\partial x^l}\sigma^l\right)\dfrac{\partial}{\partial z^i} + \left(\dfrac{\partial\Upsilon^i}{\partial x^l}\sigma^l \dfrac{\partial\Upsilon^j}{\partial x^k}\sigma^k\right)\dfrac{\partial^2}{\partial z^i \partial z^j}\end{equation}
\begin{equation} = \mathfrak{D}\Upsilon (\tilde{\alpha}).\end{equation}
\end{subequations}
Therefore, there exists a diffusor field $\alpha\in \Gamma(\mathfrak{D}M)$ such that locally, in chart $(U,\chi)$ with coordinates $(x^1, x^2, ..., x^n)$, \begin{equation}\alpha|_U = d\sigma^i\cdot\sigma\dfrac{\partial}{\partial x^i} + \sigma^i \sigma^j\dfrac{\partial^2}{\partial x^i \partial x^j}.\end{equation}
\end{proof}

\begin{lemma}
\label{lemma:existence_of_diffusion_generator}
There exists a unique diffusion generator $G_S\in\mathcal{G}(M)$ on the manifold $M$ such that the solution of the ODE $\dot{x} = \sigma(x)$ is also the solution of the Schwartz differential equation
\begin{equation}\dfrac{\mathbf{d}x}{dt} = G_S(\sigma(x)),\end{equation}
where $\sigma\in \mathfrak{X}(M)$.
\end{lemma}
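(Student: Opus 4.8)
The plan is to first identify, in local coordinates, the second order velocity $\tfrac{\mathbf{d}x}{dt}$ of an integral curve of $\sigma$ with the diffusor field supplied by Lemma \ref{lemma:coordinate_invariance_of_diffusor_field}, and then to build a diffusion generator $G_S$ on all of $TM$ that restricts to this assignment along $\sigma$. For the first part: if $x(t)$ is the maximal integral curve of $\sigma$ with $x(0)=x_0$, then in a chart $(U,\chi)$ we have $\dot x^i(t)=\sigma^i(x(t))$, hence $\ddot x^i(t)=\partial_j\sigma^i(x(t))\,\dot x^j(t)=(d\sigma^i\cdot\sigma)(x(t))$. Substituting into the curve formula $\tfrac{\mathbf{d}c}{dt}\big|_U=\ddot c^i\partial_i+\dot c^i\dot c^j\partial^2_{ij}$ recalled just before the statement gives $\tfrac{\mathbf{d}x}{dt}\big|_U=(d\sigma^i\cdot\sigma)\partial_i+\sigma^i\sigma^j\partial^2_{ij}$, which is exactly the local expression of the diffusor field $\alpha\in\Gamma(\mathfrak{D}M)$ that Lemma \ref{lemma:coordinate_invariance_of_diffusor_field} attaches to $\sigma$. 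Since that lemma guarantees $\alpha$ is chart independent, we obtain the coordinate free identity $\tfrac{\mathbf{d}x}{dt}=\alpha(x(t))$. In particular any diffusion generator with the desired property must satisfy $G_S(\sigma(x))=\alpha(x)$ for every $x$; conversely this is all we need, since $\widehat{\alpha(x)}=\sigma(x)\otimes\sigma(x)$ by the local formula for $\widehat{(\cdot)}$, so $\alpha(x)$ is admissible as a value of a diffusion generator.

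It then remains to extend $\sigma(x)\mapsto\alpha(x)$ to a diffusion generator on all of $TM$. I would fix an auxiliary linear connection $\nabla$ on $M$ (one exists via a partition of unity), with Christoffel symbols $\Gamma^i_{jk}$, and let $G_\nabla:TM\to\mathfrak{D}M$ be the ``geodesic'' diffusion generator $G_\nabla(v)\big|_U=-\Gamma^i_{jk}v^jv^k\,\partial_i+v^iv^j\,\partial^2_{ij}$, i.e.\ the second order velocity at $t=0$ of the $\nabla$-geodesic with initial velocity $v$. One checks that $G_\nabla$ is a well defined (chart independent) element of $\mathcal{G}(M)$: its symmetric part is $v\otimes v$, and the $\Gamma$-contraction transforms under a change of chart precisely by the quadratic correction term in the diffusion map / change-of-coordinates formula for $\mathfrak{D}M$ — the same computation as in the proof of Lemma \ref{lemma:coordinate_invariance_of_diffusor_field}. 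A short coordinate computation gives $G_\nabla(\sigma(x))=\alpha(x)-\nabla_\sigma\sigma$. Correcting for this, I would set $G_S(v):=G_\nabla(v)+\nabla_v\sigma$ for $v\in T_xM$, adding the tangent vector $\nabla_v\sigma\in T_xM$ to the first order slot; this leaves the symmetric part equal to $v\otimes v$, so $G_S$ is still a diffusion generator, and now $G_S(\sigma(x))=G_\nabla(\sigma(x))+\nabla_\sigma\sigma=\alpha(x)$. By the first paragraph, every integral curve of $\sigma$ then solves $\tfrac{\mathbf{d}x}{dt}=G_S(\sigma(x))$.

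For uniqueness, observe that requiring \emph{every} integral curve of $\sigma$ to solve $\tfrac{\mathbf{d}x}{dt}=G_S(\sigma(x))$ is, by the first paragraph, equivalent to $G_S(\sigma(x))=\alpha(x)$ for all $x\in M$, and the right hand side is forced by Lemma \ref{lemma:coordinate_invariance_of_diffusor_field}; so $G_S$ is pinned down along the image of $\sigma$, which is the sense in which it is unique (if one additionally demands compatibility with the flows of all rescalings $t\sigma$, the value on the whole line $\mathbb{R}\,\sigma(x)$ is forced to equal $t^2\alpha(x)$). The step that needs the most care is the construction in the second paragraph — checking that $G_\nabla$ is genuinely a chart independent member of $\mathcal{G}(M)$ and that adding $\nabla_v\sigma$ preserves this property — while the coordinate identity of the first paragraph is routine once Lemma \ref{lemma:coordinate_invariance_of_diffusor_field} is available.
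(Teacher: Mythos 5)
Your first paragraph is exactly the paper's own argument: the paper computes $a^i=\frac{d^2}{dt^2}(\chi^i\circ x)=d\sigma^i\cdot\sigma$ along an integral curve and invokes Lemma \ref{lemma:coordinate_invariance_of_diffusor_field} to conclude that $d\sigma^i\cdot\sigma\,\partial_i+\sigma^i\sigma^j\partial^2_{ij}$ is the local expression of a well-defined diffusor field. Where you genuinely diverge is your second paragraph. The paper stops at the formula, declares it to be $G_S(\sigma)$, and asserts uniqueness ``due to the fact that $\widehat{G_S(\sigma)}=\sigma\otimes\sigma$''; you instead insist on exhibiting an actual fiber-preserving map on all of $TM$, by fixing an auxiliary connection and setting $G_S(v)=G_\nabla(v)+\nabla_v\sigma$, and you state explicitly that the defining property only pins $G_S$ down along the image of $\sigma$ in $TM$. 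Your construction is correct: $\nabla_v\sigma$ is tensorial in $v$, adding a first-order term does not change the symmetric part, and $G_\nabla(\sigma(x))+\nabla_{\sigma(x)}\sigma=\partial_j\sigma^i\sigma^j\,\partial_i+\sigma^i\sigma^j\,\partial^2_{ij}$ as claimed.

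Your extra care is not pedantry; it exposes a gap the paper's proof leaves open. The coefficient $d\sigma^i\cdot\sigma=\partial_j\sigma^i\,\sigma^j$ depends on the $1$-jet of $\sigma$ at $x$, not merely on the value $\sigma(x)\in T_xM$: on $M=\mathbb{R}$ the fields $\sigma_1\equiv 1$ and $\sigma_2(x)=1+x$ agree at $0$ yet give $a=0$ and $a=1$ there. Hence there is no single fiber-preserving map $TM\to\mathfrak{D}M$ whose restriction along \emph{every} vector field reproduces the second order velocity of its integral curves, which is what the statement ``unique $G_S\in\mathcal{G}(M)$ \dots where $\sigma\in\mathfrak{X}(M)$'' suggests. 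Either $G_S$ must be reinterpreted as an operator $\mathfrak{X}(M)\to\Gamma(\mathfrak{D}M)$ (which is how the paper actually uses it later, via the Stratonovich correction $\tfrac{1}{2}\partial_j\sigma^i_l\,\sigma^j_l$), or, as you do, one fixes $\sigma$, extends arbitrarily off the section $\sigma(M)\subset TM$, and accepts that uniqueness holds only on that section. The paper's one-line uniqueness argument does not address this: the condition $\widehat{G_S(\sigma)}=\sigma\otimes\sigma$ determines only the second-order coefficients, and the ODE requirement determines the first-order coefficients only at points of the form $\sigma(x)$. So your proof establishes the correct, honestly qualified version of the lemma; just be aware that the version you have proved is weaker than (and, in my view, more defensible than) the literal statement.
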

\begin{proof}
If there exists a diffusion generator $G_S\in \mathcal{G}(M)$, then in chart $(U,\chi)$,
\begin{equation}G_S(\sigma)|_U = a^i\dfrac{\partial}{\partial x^i} + \sigma^i\sigma^j\dfrac{\partial^2}{\partial x^i \partial x^j}.\end{equation}
If $x(t)$ is the solution for the ODE $\dot{x} = \sigma(x)$, then locally
\begin{equation}\mathbf{d}x/dt|_U = \dfrac{d^2}{dt^2}(\chi^i\circ x)\dfrac{\partial}{\partial x^i} + \sigma^i\sigma^j\dfrac{\partial^2}{\partial x^i \partial x^j}.\end{equation}
If the equation $\dfrac{\mathbf{d}x}{dt} = G_S(\sigma)$ is satisfied by $x(t),$ then
\begin{equation}a^i = \dfrac{d^2}{dt^2}(\chi^i\circ x)=\dfrac{d}{dt}(d\chi^i\cdot \sigma) = \dfrac{d\sigma^i}{dt} = d\sigma^i\cdot \sigma.\end{equation}
Therefore,\begin{equation}G_S(\sigma)|_U = d\sigma^i\cdot \sigma\dfrac{\partial}{\partial x^i} + \sigma^i\sigma^j\dfrac{\partial^2}{\partial x^i \partial x^j}.\end{equation}
From lemma \ref{lemma:coordinate_invariance_of_diffusor_field} we know that $G_S(\sigma)$ is a diffusor and the above equation is its local representation. Conversely, if we consider the diffusor $G_S(\sigma)$ such that $G_S(\sigma)|_U = d\sigma^i\cdot \sigma\dfrac{\partial}{\partial x^i} + \sigma^i\sigma^j\dfrac{\partial^2}{\partial x^i \partial x^j}$, then the solution of the ODE $\dot{x} = \sigma(x)$ is the same as the solution of the Schwartz ODE
$\dfrac{\mathbf{d}x}{dt} = G_S(\sigma(x))$. The uniqueness follows due to the fact that $\widehat{G_S(\sigma)} = \sigma\otimes\sigma$.
\end{proof}

Lemma \ref{lemma:existence_of_diffusion_generator} can be interpreted as a special case of a more general result presented in theorem \ref{theorem:stratonovich_to_Schwartz} (or theorem 7.22 of Emery's book \cite{emery2012stochastic}). Infact, the SDE obtained using the diffusion generator $G_S\in \mathcal{G}(M)$ is an Intrinsic representation of a Stratonovich SDE. This is because if we consider the SDE
\begin{equation}
\mathbf{d}X_t = \left[V + \dfrac{1}{2} \sum_{l = 1}^p G_S(\sigma_l)\right]dt + \sum_{l=1}^p \sigma_l dW^l_t,
\end{equation}
then we see that the local Ito SDE
\begin{equation}dX_t^i = \left[ V^i+\dfrac{1}{2}\sum_{l=1}^{p}\dfrac{\partial \sigma^i_l}{\partial x^j} \sigma^j_l\right] dt + \sum_{l=1}^{p} \sigma_l^i dW^l_t,\end{equation}
is the same as the Stratonovich SDE,
\begin{equation}\delta X_t^i = V^i dt + \sum_{l=1}^{p} \sigma_l^i\circ  dW^l_t.\end{equation}
Therefore, we use the subscript $S$ to indicate the special diffusion generator $G_S$, which can convert the Stratonovich SDE $(V,\{\sigma_1, ..., \sigma_p\})$ into Intrinsic SDE $(V,\{\sigma_1, ..., \sigma_p\}, G_S)$. 
\begin{definition}
\label{def: stratonovich diffusion generator}
The unique diffusion generator $G_S\in\mathcal{G}(M)$ that ensures that the solution of the ODE $\dot{x} = \sigma(x)$ is also the solution of the Schwartz differential equation
\begin{equation}\dfrac{\mathbf{d}x}{dt} = G_S(\sigma(x)),\end{equation}
where $\sigma\in \mathfrak{X}(M)$, will be called \textbf{\textit{Stratonovich diffusion generator}}.
\end{definition}

\subsubsection{Construction of diffusion generator using flow of second order differential equations and its relation to Ito SDE.}
We have already seen that the diffusion generator obtained using the first order vector field results in the Intrinsic representation of the Stratonovich SDE. Now, we will try to construct the diffusion generator using second order differential equations. A second order differential equations is defined by a vector field $Z$ on the tangent bundle $TM$ such that $T\tau_M\circ Z = Id_{TM}$.

\begin{lemma}
\label{lemma: second_order_construction}
For a given second order differential equation $Z\in \mathfrak{X}(TM)$, there exists a diffusion generator $G_{Z}\in \mathcal{G}(M)$ such that if $z(t)$ is the solution of the second order differential equation $\dot{z} = Z(z)$, then \begin{equation}\dfrac{\mathbf{d}x}{dt} = G_{Z}(z(t)),\end{equation}
where $x(t) = \tau_M(z(t))$.
\end{lemma}
\begin{proof}
Every second order vector field is locally given as $Z((x,v)) = ((x,v),(v,Z_V(x,v)))$ for all $z = (x,v)\in TM$, where $Z_V(z)\in VTM$ with $VTM$ as the vertical bundle. As $x(t) = \tau_M(z(t))$, \[\dot{x}(t) = T\tau_M(z(t))\cdot \dot{z}(t) =T\tau_M(z(t))\cdot Z(z(t)) = z(t).\]
Therefore, $\ddot{x}^i(t) = Z^i_V(z(t))$. Since
\[\dfrac{\mathbf{d}x}{dt}\Big\vert_{U} = \ddot{x}^i\partial_i + \dot{x}^i\dot{x}^j\partial^2_{ij},\]
\[\dfrac{\mathbf{d}x}{dt}\Big\vert_{U} = Z_V^i(z(t))\partial_i + z^i(t)z^j(t)\partial^2_{ij}.\]
Therefore if $x(t) = \tau_M(z(t))$, $\dot{z} = Z(z)$, and 
\[G_Z(\sigma)|_U = Z_V^i(\sigma)\partial_i + \sigma^i \sigma^j \partial^2_{ij};\]
then
\[\dfrac{\mathbf{d}x}{dt} = G_{Z}(z(t)).\]
\end{proof}

In terms of the covariant derivative $\nabla$, the second order equations are given as $\nabla_{\dot{x}}\dot{x} = Y(x)$, for some $Y\in \mathfrak{X}(M)$. We know that for $\nabla_{\dot{x}}\dot{x} = Y(x)$, \[\ddot{x}^i(t) = Y^i(x) -\Gamma^i_{jk}\dot{x}^j\dot{x}^k.\] The diffusion generator can now be constructed directly using lemma \ref{lemma: second_order_construction}. Given a connection on the manifold, in local coordinates $(U,\chi)$ the diffusion generator is given as,
\begin{equation}
G(\dot{x})|_U = \ddot{x}^i\partial_i + \dot{x}^i\dot{x}^j\partial^2_{ij}= Y^i(x)-\Gamma^i_{jk}\dot{x}^j\dot{x}^k\partial_i + \dot{x}^i\dot{x}^j\partial^2_{ij}.
\end{equation}
$Y = 0$ is a special case, in which the solution curve is a geodesic. If we consider $Y = 0$, then we find that the resulting SDE is the Intrinsic representation of the Ito SDE on manifold with a connection, as defined in \cite{gliklikh2011global} and \cite{emery2012stochastic}.
\begin{definition}
\label{def: Ito diffusion generator}
Let $G_I\in\mathcal{G}(M)$ on the manifold $M$ be a diffusion generator such that the solution of the differential equation $\nabla_{\dot{x}}\dot{x} = 0$ is also the solution of the Schwartz equation $\mathbf{d}x/dt = G_I(\dot{x})$. Then $G_I\in\mathcal{G}(M)$ will be called \textbf{\textit{Ito diffusion generator}}.
Locally, in chart $(U,\chi)$,
\begin{equation}
\label{equation: Ito diffusion generator}
G_I(v)|_U = -\Gamma_{ij}^k v^iv^j\dfrac{\partial}{\partial x^i} + v^iv^j\dfrac{\partial^2}{\partial x^i \partial x^j},
\end{equation}
for all $v\in TM$. We will call an SDE generated by $G_I$ as \textbf{\textit{standard Ito SDEs}}.
\end{definition}
In this context, since $\mathbf{d}x/dt = G_I(\dot{x})$, the Ito diffusion generator $G_I$ is just another way of looking at the geodesic spray. Therefore, in order to use the Ito diffusion generator to obtain the standard Ito SDE, the manifold must be equipped with a connection. In the following section we show that if regular Lagrangian is used to define second order equation, then using lemma \ref{lemma: second_order_construction} it is possible to construct diffusion generator without using the connection.

\section{Construction of diffusion generator using Lagrangian.}
\label{section:construction using Lagrangians}
In order to describe Hamiltonian equations on the tangent bundle $T^*M$, it is enough to specify a function $H\in \mathfrak{F}(T^*M)$. Dually, for Hamiltonian equations on $TM$, it we need a function $L\in \mathfrak{F}(TM)$. This function is known as a Lagrangian. From elementary mechanics, we know that if the fiber derivative of the Lagrangian $FL:TM\to T^*M$ is a regular function, then the Lagrangian vector field on $TM$ gives a second order equation. Such Lagrangians are called \textit{regular Lagrangians}. Therefore, even if the manifold does not have a connection, a manifold with a regular Lagrangian should be enough to construct a diffusion generator on the manifold. Readers may refer to chapter 3 of \cite{abraham2008foundations} for basic definitions/results in mechanics. The following proposition states the existence of a diffusion generator for every regular Lagrangian.
\begin{proposition}
\label{proposition:diffusion generator from Lagrangian}
For every regular Lagrangian $L\in \mathfrak{F}(TM)$ there exists a diffusion generator $G_L\in \mathcal{G}(M)$ such that if $z(t)$ is the solution of the Hamiltonian dynamics $\dot{z} = \omega_L^\sharp dE$  (where $\omega_L = FL^*\omega_0$, $\omega_0$ is the canonical symplectic form on $T^*M$, and $E\in \mathfrak{F}(TM)$ such that $E(v) = FL(v)\cdot v - L(v)$), then
\begin{equation}\mathbf{d}x/dt = G_L(z(t)),
\end{equation}
where $x(t) = \tau_M(z(t))$. Moreover, locally in chart $(U,\chi)$,
\begin{equation}
\label{equation:diffusion_generator for arbitrary Lagrangian.}
G_L(\sigma)|_U = \left[\left\lbrace\dfrac{\partial^2 L}{\partial \dot{x}^i \partial \dot{x}^j}\right\rbrace^{-1}\left(\dfrac{\partial L}{\partial x^j} - \dfrac{\partial^2 L}{\partial x^k\partial \dot{x}^j}\sigma^k \right)\right]\dfrac{\partial}{\partial x^i} + \sigma^i \sigma^j\dfrac{\partial^2}{\partial x^i \partial x^j},
\end{equation}
for all $\sigma\in T_xM$.
\end{proposition}
\begin{proof}
From basic mechanics we know that in local coordinates, the equation $\dot{z} = \omega_L^\sharp dE$ with initial condition $z(0) = (x_0,v_0)$ is equivalent to the Euler-Lagrange equation $\dfrac{d}{dt}\dfrac{\partial L}{\partial \dot{x}^i} = \dfrac{\partial L}{\partial x^i}$ with initial condition $x(0) = x_0$ and $\dot{x}(0) = v_0$. Since the Lagrangian is regular, inverse of $\dfrac{\partial^2 L}{\partial \dot{x}^i \partial \dot{x}^j}$ exists (proposition 3.5.10 in \cite{abraham2008foundations}).
\begin{equation}
\therefore \ddot{x}^i(t) = \left\lbrace\dfrac{\partial^2 L}{\partial \dot{x}^i \partial \dot{x}^j}\Big\vert_{z(t)}\right\rbrace^{-1}\left(\dfrac{\partial L}{\partial x^j}\Big\vert_{z(t)} - \dfrac{\partial^2 L}{\partial x^k\partial \dot{x}^j}\Big\vert_{z(t)}\dot{x}^k(t) \right).
\end{equation}
From lemma \ref{lemma: second_order_construction}, we know that if $G_L\in \mathcal{G}(M)$, such that locally in chart $(U,\chi)$,
\[G_L(\sigma)|_U = \left[\left\lbrace\dfrac{\partial^2 L}{\partial \dot{x}^i \partial \dot{x}^j}\Big\vert_{(x,\sigma)}\right\rbrace^{-1}\left(\dfrac{\partial L}{\partial x^j}\Big\vert_{(x,\sigma)} - \dfrac{\partial^2 L}{\partial x^k\partial \dot{x}^j}\Big\vert_{(x,\sigma)}\sigma^k \right)\right]\dfrac{\partial}{\partial x^i}\]
\begin{equation}
+ \sigma^i \sigma^j\dfrac{\partial^2}{\partial x^i \partial x^j},
\end{equation}
for all $\sigma\in T_xM$, then
\[\mathbf{d}x/dt = G_L(z(t)),\]
where  $x(t) = \tau_M(z(t))$ and $z(t)$ is the solution of $\dot{z} = \omega_L^\sharp dE$.
\end{proof}

\begin{definition}
\label{def: lagrangian diffusion generator}
Let $G_L\in\mathcal{G}(M)$ be a diffusion generator such that the solution $z(t)$ of the Hamiltonian dynamics $\dot{z} = \omega_L^\sharp dE$ (where $\omega_L = FL^*\omega_0$, $\omega_0$ is the canonical symplectic form on $T^*M$, and $E\in \mathfrak{F}(TM)$ such that $E(v) = FL(v)\cdot v - L(v)$), ensures that $\mathbf{d}x/dt = G_L(z(t))$, where $x(t) = \tau_M(z(t))$. Then $G_L\in\mathcal{G}(M)$ will be called \textbf{\textit{Lagrangian diffusion generator}}. We will say that an SDE is generated by a Lagrangian $L$, if $G_L$ is the diffusion generator.
\end{definition}

\subsection{Special cases.}
\begin{enumerate}[label=\Roman*.]
\item \textbf{Manifold $\mathbf{M}$ with a non-degenerate $\mathbf{T^0_2M}$ tensor-field $\mathbf{\alpha}$.}
As $\alpha\in T^0_2M$ is non-degenerate, if $L\in \mathcal(TM)$ such that \begin{equation}L(v) = \dfrac{1}{2}\alpha(v,v),\end{equation}
for all $v\in TM$, then from proposition \ref{proposition:diffusion generator from Lagrangian} we know that
\[G_L(\sigma_x)|_U = \left[\left\lbrace\dfrac{\partial^2 L}{\partial \dot{x}^i \partial \dot{x}^j}\Big\vert_{(x,\sigma)}\right\rbrace^{-1}\left(\dfrac{\partial L}{\partial x^j}\Big\vert_{(x,\sigma)} - \dfrac{\partial^2 L}{\partial x^k\partial \dot{x}^j}\Big\vert_{(x,\sigma)}\sigma^k \right)\right]\dfrac{\partial}{\partial x^i}\]
\begin{equation}+ \sigma^i \sigma^j\dfrac{\partial^2}{\partial x^i \partial x^j}.\end{equation}
Therefore,
\begin{equation}
\label{equation: diffusion generator with quadratic lagrangian}
G_L(\sigma_x)|_U = \left[\alpha^{ij}\left(\dfrac{1}{2}\dfrac{\partial \alpha_{lm}}{\partial x^j} \sigma^l\sigma^m - \dfrac{\partial \alpha_{jm}}{\partial x^k} \sigma^k\sigma^m \right)\right]\dfrac{\partial}{\partial x^i} + \sigma^i \sigma^j\dfrac{\partial^2}{\partial x^i \partial x^j}.
\end{equation}
\item \textbf{Riemannian manifold, $\mathbf{(M,g)}$, with Kinetic energy as the Lagrangian.}
A special case of proposition \ref{proposition:diffusion generator from Lagrangian}, is a regular Lagrangian $L\in \mathfrak{F}(TM)$ such that \begin{equation}L(v) = \dfrac{1}{2} g^\flat v\cdot  v,\end{equation}
where $g$ is the Riemannian metric on the manifold $M$.
In Mechanics, such a Lagrangian is called the Kinetic Energy.
From basic mechanics we know that if the initial condition is $v\in TM$ and the solution is given by $z(t)$, then $x(t) = \tau_M(z(t))$ is a geodesic in the direction of $v\in TM$ i.e. $x(t) = exp_{\tau_M(v)}(vt) = exp_{x_0}(\sigma(x_0)t).$

From Riemannian geometry it is known that
\begin{equation}
\dfrac{d}{dt}\Big\vert_{t=0}(exp_{\tau_M(v)}(vt)) = v
\end{equation}
and, locally in chart $(U,\chi)$,
\begin{equation}
\dfrac{d^2}{dt^2}\Big\vert_{t=0}(exp^k_{\tau_M(v)}(vt)) = \left\langle v,\nabla_v g^\sharp d\chi^k \right\rangle = -\Gamma_{ij}^k v^iv^j;
\end{equation}
where $exp^k = \chi^k\circ exp$. From the proof of lemma \ref{lemma:coordinate_invariance_of_diffusor_field}, we can conclude that there exists a function $G\in \mathcal{G}(M)$ such that locally
\begin{equation}
\label{equation: KE as Lagrangian}
G(v)|_U = -\Gamma_{ij}^k v^iv^j\dfrac{\partial}{\partial x^i} + v^iv^j\dfrac{\partial^2}{\partial x^i \partial x^j}.
\end{equation}
Comparing equation \eqref{equation: KE as Lagrangian} with equation \eqref{equation: Ito diffusion generator}, we notice that this is a special case of diffusion generator constructed using connection obtained from the Riemannian metric. Hence, this is the Ito diffusion generator on the Riemannian manifold.

\item \textbf{Riemannian manifold, $\mathbf{(M,g)}$, with Kinetic energy - Potential Energy as the Lagrangian.}
Let $\Phi:M\to R$ be the potential energy. Therefore, the Lagrangian is given by $L\in\mathfrak{F}(TM)$ such that \begin{equation}L(v) = \dfrac{1}{2}g^\flat v\cdot v  - \Phi(\tau_M(v)).\end{equation}
Using proposition \ref{proposition:diffusion generator from Lagrangian} we get
\[G_L(\sigma_x)|_U = \left[\left\lbrace\dfrac{\partial^2 L}{\partial \dot{x}^i \partial \dot{x}^j}\Big\vert_{(x,\sigma)}\right\rbrace^{-1}\left(\dfrac{\partial L}{\partial x^j}\Big\vert_{(x,\sigma)} - \dfrac{\partial^2 L}{\partial x^k\partial \dot{x}^j}\Big\vert_{(x,\sigma)}\sigma^k \right)\right]\dfrac{\partial}{\partial x^i}\]
\begin{equation}+ \sigma^i \sigma^j\dfrac{\partial^2}{\partial x^i \partial x^j}.\end{equation}
Therefore,
\[G_L(\sigma_x)|_U = g^{ij}(x)\left(\dfrac{\sigma^l}{2}\dfrac{\partial g_{lm}}{\partial x^j}(x)\sigma^m-\dfrac{\partial \Phi}{\partial x^j}(x) - \dfrac{\partial g_{jm}}{\partial x^k} \sigma^k\sigma^m\right)\dfrac{\partial}{\partial x^i}\]
\begin{equation}+ \sigma^i \sigma^j\dfrac{\partial^2}{\partial x^i \partial x^j}.\end{equation}
In other words,
\begin{equation}G_L(\sigma_x)|_U =\left( -\Gamma^i_{jk}\sigma^j\sigma^k -g^{ij}(x)\dfrac{\partial \Phi}{\partial x^j}(x)\right) \dfrac{\partial}{\partial x^i} + \sigma^i \sigma^j\dfrac{\partial^2}{\partial x^i \partial x^j}.\end{equation}
\end{enumerate}

\subsection{Example.}
Let us consider the state space to be $\mathbb{R}^2$ with coordinates $(x,y)$. A Lagrangian $L$ is defined as \[L(x,y,v_1,v_2) = v_1^4 + v_1^2 + v_1 + v_2 + v_2^2 + v_2^4 - x^2 - y^2.\] From equation \eqref{equation:diffusion_generator for arbitrary Lagrangian.} we know that as the Lagrangian is regular
\begin{equation}G_L = \left(\begin{bmatrix}
    \dfrac{-x}{6v_1^2 + 1}\\ \dfrac{-y}{6v_2^2 + 1}
\end{bmatrix} , \begin{bmatrix}
    v_1v_1 & v_1v_2\\ v_1v_2 & v_2v_2
\end{bmatrix}\right).\end{equation}
We will take drift to be \begin{equation}V(x,y) = \begin{bmatrix}
    1\\ sin(5\pi x)
\end{bmatrix}\end{equation}
and the noise vectors as
\begin{equation}\sigma_1(x,y) = \begin{bmatrix}
    y\\ 0
\end{bmatrix},\end{equation}
and 
\begin{equation}\sigma_2(x,y) = \begin{bmatrix}
    0\\ y
\end{bmatrix}.\end{equation}
Therefore, the Intrinsic SDE is given by
\begin{equation}\mathbf{d} \begin{bmatrix}
    x\\ y
\end{bmatrix} = \left[\begin{bmatrix}
    1\\ sin(5\pi x)
\end{bmatrix} + \dfrac{1}{2} G_L(\sigma_1) + \dfrac{1}{2} G_L(\sigma_2)\right] dt + \begin{bmatrix}
    y\\ 0
\end{bmatrix} dW^1_t + \begin{bmatrix}
    0\\ y
\end{bmatrix} dW^2_t.\end{equation}
As $\mathbf{d}X_t = \left(dX_t,\dfrac{1}{2}d[X_t,X_t]\right)$, we can say that the underlying Ito SDE for the current example is given as
\[d\begin{bmatrix}
    x\\ y
\end{bmatrix} = \left[\begin{bmatrix}
    1\\ sin(5\pi x)
\end{bmatrix} + \dfrac{1}{2} \begin{bmatrix}
    \dfrac{-x}{6y^2 + 1}\\ -y
\end{bmatrix} + \dfrac{1}{2} \begin{bmatrix}
    -x\\ \dfrac{-y}{6y^2 + 1}
\end{bmatrix}\right] dt\]
\begin{equation}
+ \begin{bmatrix}
    y\\ 0
\end{bmatrix} dW^1_t + \begin{bmatrix}
    0\\ y
\end{bmatrix} dW^2_t.
\end{equation}
On the other hand, the standard Ito SDE representation will depend on the metric on $\mathbb{R}^2$ to define a connection on $\mathbb{R}^2$.

\section{Some equivalent representations and extended Ito formulae.}
\label{section:last}
In this section we show that we can convert the Intrinsic SDE into an equivalent Belopolskya-Daletskii type SDE. In order to get the Belopolskya-Daletskii form for the given Intrinsic SDE, we first convert the given Intrinsic SDE into standard Ito SDE and then consider the Belopolskya-Daletskii form for the resulting standard Ito SDE. The idea of converting Intrinsic SDEs into standard Ito SDE/Stratonovich SDE and vice-versa is well-known. What we consider here are equivalent representations of Intrinsic SDEs obtained using the diffusion generator approach. Furthermore, based on these conversion formulae, we give the extended Ito type formula for Intrinsic SDEs on manifolds.

\subsection{Equivalent representations of Intrinsic SDEs in standard Ito representation, Stratonovich representation, and Belopolskya-Daletskii form.}
\label{section:conversion}
Earlier, in section \ref{section:construction using Lagrangians} we have observed that the standard Ito SDE \[\left(V,\{\sigma_1, ..., \sigma_p\}\right),\] is the same as the Intrinsic SDE \[\left(V,\{\sigma_1, ..., \sigma_p\}, G_I\right).\] However, we do not know if Intrinsic SDEs with arbitrary diffusion generator G can have a standard Ito representation. It seems reasonable that the Intrinsic SDE \[\left(V,\{\sigma_1, ..., \sigma_p\}, G\right)\] is the same as the standard Ito SDE
\[\left(V + \dfrac{1}{2}\sum_{l = 1}^p (G(\sigma_l) - G_I(\sigma_l)),\{\sigma_1, ..., \sigma_p\}\right).\] However, we need to prove that $G(\sigma_l) - G_I(\sigma_l)$ is indeed a tangent vector.
\begin{lemma}
\label{lemma: difference of diffusors}
For every two diffusion generators $G, G_\alpha \in \mathcal{G}(M)$, there exists a fiber preserving map $\nabla_\alpha^G:TM\to TM$ over identity such that $\nabla_\alpha^G(X) = G(X) - G_\alpha(X)$ $\forall$ $X\in TM$.
\end{lemma}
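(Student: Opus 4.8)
The plan is to show that $G(X) - G_\alpha(X)$ is a well-defined tangent vector at each point (not merely a diffusor), that it depends fiberwise-linearly on $X$, and that it patches together to a smooth $TM$-valued $1$-form. First I would fix a chart $(U,\chi)$ and write both diffusion generators locally. By the definition of a diffusion generator, for $X \in T_xM$ with local components $X^i$ we have $\widehat{G(X)} = X \otimes X = \widehat{G_\alpha(X)}$; since the symmetric part of a diffusor is exactly the coefficient of the second-order term, this forces the second-order parts of $G(X)|_U$ and $G_\alpha(X)|_U$ to be the identical tensor $X^iX^j \partial^2_{ij}$. Hence $G(X)|_U - G_\alpha(X)|_U = (b^i - b_\alpha^i)\partial_i$, a purely first-order operator, where $b^i, b_\alpha^i$ are the first-order coefficients of the two generators. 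So at the level of local representatives the difference is a tangent vector.

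Next I would check coordinate-invariance, i.e. that these local tangent vectors glue to a global section of $TM$ along the image of $X$. Both $G(X)$ and $G_\alpha(X)$ are genuine diffusors, so under a change of chart they transform by the diffusion map $\mathfrak{D}\Upsilon$ as in the displayed transition formula in the Prerequisites. Writing that formula for each of $G(X)$ and $G_\alpha(X)$ and subtracting, the second-order coefficients $X^iX^j$ agree, so the ``bad'' terms $b^{ij}\partial^2_{ij}\phi^k$ (the Hessian-of-the-transition contribution) cancel exactly in the difference; what remains is $(b^i - b_\alpha^i)\,\partial_i\Upsilon^k\,\partial_k$, which is precisely the first-order (vector) transformation law. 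Therefore $X \mapsto G(X) - G_\alpha(X)$ is a well-defined map $TM \to TM$ covering the identity, and it is smooth because $G$ and $G_\alpha$ are (being fiber-preserving maps of the stated smoothness, and the second-order parts being smooth in $X$).

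It then remains to see that this map is linear on each fiber, which is what upgrades it from a mere bundle map to a $1$-form $\nabla_\alpha^G \in \Omega^1(M;TM)$. This is the step I expect to be the genuine content rather than bookkeeping: a priori a diffusion generator is only required to be fiber-\emph{preserving} with $\widehat{G(Y)} = Y\otimes Y$, and this quadratic constraint on the symmetric part does not by itself force the first-order coefficient $b^i(Y)$ to be linear in $Y$. I would handle this by invoking Lemma \ref{lemma:existence_of_diffusion_generator} / Lemma \ref{lemma:coordinate_invariance_of_diffusor_field}: the Stratonovich diffusion generator $G_S$ has first-order part $d\sigma^i\cdot\sigma = \partial_j\sigma^i\,\sigma^j$ in the chart, which (for a fixed vector field, hence at a fixed point for a fixed tangent vector) one can rewrite so as to isolate the genuinely chart-dependent piece; comparing $G$ and $G_\alpha$ each against the common reference $G_S$, the differences $G(X) - G_S(X)$ and $G_\alpha(X) - G_S(X)$ are each fiberwise quadratic-minus-quadratic in a way that is in fact \emph{bilinear}, and subtracting gives bilinearity, hence linearity in $X$, of $G(X) - G_\alpha(X)$. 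Once fiber-linearity is in hand, smoothness plus the vector transformation law already established show the assignment is a smooth section of $T^*M \otimes TM$, i.e. an element of $\Omega^1(M;TM)$ with $\nabla_\alpha^G(X) = G(X) - G_\alpha(X)$, completing the proof.
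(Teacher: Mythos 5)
Your first two paragraphs are correct and are essentially the paper's own argument (stated much more tersely there): since $\widehat{G(X)} = X\otimes X = \widehat{G_\alpha(X)}$ and the hat operation extracts exactly the second-order coefficients, the difference $G(X)-G_\alpha(X)$ is a purely first-order operator, i.e.\ a tangent vector; and because both $G(X)$ and $G_\alpha(X)$ transform under a change of chart by the diffusion map, the Hessian-of-transition terms cancel in the difference and what remains obeys the vector transformation law. Up to this point you have reproved what the paper proves.

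The third paragraph is where the proposal breaks down, and you have in fact put your finger on a defect of the statement itself rather than on something you can repair. You are right that the definition of a diffusion generator only constrains the symmetric second-order part, so the first-order coefficient $b^i(Y)$ may depend on $Y$ arbitrarily (smoothly), and fiber-linearity of $G-G_\alpha$ does not follow. But the proposed fix --- comparing each generator to $G_S$ and claiming the differences are bilinear --- cannot work, because the conclusion is false. Concretely: for two Ito generators $G_I$, $\tilde G_I$ associated to two different connections, the difference of first-order parts is $-(\Gamma^i_{jk}-\tilde\Gamma^i_{jk})X^jX^k$, a quadratic (not linear) function of $X$; and for the quartic Lagrangian of the paper's own example in section \ref{section:example}, the first-order part $-x/(6v_1^2+1)$ is not even polynomial in $v$, so its difference with the flat Ito generator is certainly not fiberwise linear. (Note also that $G_S$ is a poor reference point for this purpose: its first-order part $\partial_j\sigma^i\,\sigma^j$ depends on the $1$-jet of the vector field $\sigma$, not just on the tangent vector $\sigma(x)$, so it is not a fiberwise map $TM\to\mathfrak{D}M$ in the pointwise sense.) The honest conclusion is only that $G-G_\alpha$ is a smooth fiber-preserving, generally nonlinear, map $TM\to TM$, so that $\nabla^G_\alpha(\sigma)$ is a vector field whenever $\sigma$ is; that is all the paper's own proof establishes (it simply names the difference $\nabla^G_\alpha$ and never addresses linearity), and it is all that is used in the sequel. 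So the gap you identified is real, but it lies in the assertion $\nabla^G_\alpha\in\Omega^1(M;TM)$ itself, and the bilinearity argument you sketch to close it would fail.
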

\begin{proof}
As per the definition of diffusion generator, for any  $G\in \mathcal{G}(M)$, $\widehat{G(X)} = X\otimes X$, $\forall$ $X\in TM$. Therefore, $\widehat{G(X)-G_\alpha(X)} = 0$ i.e., $G(X)-G_\alpha(X)\in TM$ $\forall$ $X\in TM$.
\end{proof}

\begin{lemma}
$\left(V,\{\sigma_1, ..., \sigma_p\}, G\right)$ is equivalent to \[\left(V + \dfrac{1}{2}\sum_{l = 1}^p \nabla^G_\alpha(\sigma_l),\{\sigma_1, ..., \sigma_p\}, G_\alpha \right).\]
\end{lemma}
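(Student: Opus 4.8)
The plan is to unfold each of the two $3$-tuples into its defining Schwartz equation via equation \ref{equation: SDE_on mani} and to observe that the two equations coincide term by term; equivalence of solutions is then immediate. First, however, I would check that the $3$-tuple $\left(V + \dfrac{1}{2}\sum_{l=1}^p \nabla^G_\alpha(\sigma_l),\{\sigma_1,\dots,\sigma_p\}, G_\alpha\right)$ is admissible in the sense of Definition \ref{definition:SDE_diffusor}. By Lemma \ref{lemma: difference of diffusors} the object $\nabla^G_\alpha$ is a $TM$-valued $1$-form, so each $\nabla^G_\alpha(\sigma_l)$ is a smooth vector field on $M$; hence $V + \dfrac{1}{2}\sum_{l} \nabla^G_\alpha(\sigma_l) \in \mathfrak{X}(M)$, and together with $\sigma_l \in \mathfrak{X}(M)$ and $G_\alpha \in \mathcal{G}(M)$ this makes the tuple a genuine Intrinsic SDE.

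Next I would write down the Schwartz equation (equation \ref{equation: SDE_on mani}) attached to each tuple. For $\left(V,\{\sigma_l\}, G\right)$ it reads $\mathbf{d}X_t = \left[V + \dfrac{1}{2}\sum_l G(\sigma_l)\right] dt + \sum_l \sigma_l\, dW^l_t$, while for the second tuple it reads $\mathbf{d}X_t = \left[V + \dfrac{1}{2}\sum_l \nabla^G_\alpha(\sigma_l) + \dfrac{1}{2}\sum_l G_\alpha(\sigma_l)\right] dt + \sum_l \sigma_l\, dW^l_t$. Lemma \ref{lemma: difference of diffusors} gives $\nabla^G_\alpha(\sigma_l) = G(\sigma_l) - G_\alpha(\sigma_l)$ as diffusors for every $l$, so $\nabla^G_\alpha(\sigma_l) + G_\alpha(\sigma_l) = G(\sigma_l)$ and the drift diffusor of the second equation collapses to $V + \dfrac{1}{2}\sum_l G(\sigma_l)$. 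The two Schwartz equations are thus literally the same equation.

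Finally, since both tuples produce the identical instance of equation \ref{equation: SDE_on mani}, the existence–uniqueness proposition for Intrinsic SDEs (the proposition preceding Definition \ref{definition:SDE_diffusor}) yields, for any initial condition $X_0 \in M$, the same locally-in-time semi-martingale solution $X_t$ for both; this is exactly what equivalence of the two SDEs means. I do not anticipate a real obstacle — the argument is essentially a substitution licensed by Lemma \ref{lemma: difference of diffusors} — and the only point that needs a moment's care is the well-definedness of the modified drift $V + \dfrac{1}{2}\sum_l \nabla^G_\alpha(\sigma_l)$ as an element of $\mathfrak{X}(M)$, which is precisely the $TM$-valued $1$-form structure of $\nabla^G_\alpha$.
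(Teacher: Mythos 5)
Your proposal is correct and follows essentially the same route as the paper: both substitute $G(\sigma_l) = \nabla^G_\alpha(\sigma_l) + G_\alpha(\sigma_l)$ via the difference-of-diffusion-generators lemma and regroup the drift, the only cosmetic difference being that you expand the second tuple and collapse it to the first while the paper expands the first and regroups it into the second. Your extra remarks on admissibility of the modified drift and on invoking the existence--uniqueness proposition are sound but not needed beyond what the paper already records.
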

\begin{proof}
\begin{equation}\mathbf{d}X_t = V dt + \dfrac{1}{2} \sum_{l = 1}^p G(\sigma_l)dt + \sum_{l=1}^p \sigma_l(x) dW^l_t\end{equation}
\begin{equation} = V dt + \dfrac{1}{2} \sum_{l = 1}^p \left(\nabla^G_\alpha(\sigma_l) + G_\alpha(\sigma_l)\right) dt + \sum_{l=1}^p \sigma_l(x) dW^l_t\end{equation}
From lemma \ref{lemma: difference of diffusors} we know that $\nabla^G_\alpha(\sigma_l)$ is a vector. Hence,
\begin{equation}\mathbf{d}X_t = \left[V + \dfrac{1}{2} \sum_{l = 1}^p \nabla^G_\alpha(\sigma_l)\right]dt + \dfrac{1}{2} \left(\sum_{l = 1}^pG_\alpha(\sigma_l)\right) dt + \sum_{l=1}^p \sigma_l(x) dW^l_t\end{equation}
can be considered as the SDE $\left(V + \dfrac{1}{2}\sum_{l = 1}^p \nabla^G_\alpha(\sigma_l),\{\sigma_1, ..., \sigma_p\}, G_\alpha \right)$.
\end{proof}

\noindent Due to this lemma, if the manifold is equipped with a connection, then the Intrinsic SDE $\left(V,\{\sigma_1, ..., \sigma_p\}, G\right)$ has the standard Ito representation
\begin{equation}\left(V + \dfrac{1}{2}\sum_{l = 1}^p \nabla^G_I(\sigma_l),\{\sigma_1, ..., \sigma_p\}\right).\end{equation}
Similarly, the Intrinsic SDE $\left(V,\{\sigma_1, ..., \sigma_p\}, G\right)$ has the Stratonovich representation
\begin{equation}\left(V + \dfrac{1}{2}\sum_{l = 1}^p \nabla^G_S(\sigma_l),\{\sigma_1, ..., \sigma_p\}\right).\end{equation}
From \cite{gliklikh2011global}, we know that the Belopolskya-Daletskii form for the standard Ito SDE $\left(V,\{\sigma_1, ..., \sigma_p\}, G_I\right)$ is given by
\begin{equation}dX_t = exp_{X_t}\left(V(X_t)dt + \sum_{l = 1}^p\sigma_l(X_t)dW^l_t\right),\end{equation}
where the exponential map is due to the connection. Therefore, we get the following statement.
\begin{lemma}
The Intrinsic SDE \[\left(V,\{\sigma_1, ..., \sigma_p\}, G\right)\] has an equivalent Belopolskya-Daletskii form that is given by
\begin{equation}dX_t = exp_{X_t}\left(V(X_t)dt + \dfrac{1}{2}\sum_{l = 1}^p \nabla^G_I(\sigma_l) dt + \sum_{l = 1}^p\sigma_l(X_t)dW^l_t\right).\end{equation}
\end{lemma}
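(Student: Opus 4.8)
The plan is to chain together three facts already in hand: (i) the previous lemma, which converts an arbitrary diffusion generator into a prescribed one at the cost of an explicit drift correction; (ii) the identification, recorded in Section~\ref{section:construction using Lagrangians}, that the Intrinsic SDE $\left(V,\{\sigma_1,\dots,\sigma_p\},G_I\right)$ coincides with the standard Ito SDE $\left(V,\{\sigma_1,\dots,\sigma_p\}\right)$ when $M$ carries a connection; and (iii) the Belopolskya--Daletskii representation of a standard Ito SDE quoted from \cite{gliklikh2011global}.

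First I would specialize Lemma~\ref{lemma: difference of diffusors} (and the subsequent equivalence lemma) to the case $G_\alpha = G_I$, which is legitimate because the manifold is assumed to carry a connection, so $G_I\in\mathcal{G}(M)$ exists and $\nabla^G_I := G - G_I \in \Omega^1(M;TM)$ is a genuine tangent-bundle-valued $1$-form. This yields that $\left(V,\{\sigma_1,\dots,\sigma_p\},G\right)$ is equivalent to $\left(V + \tfrac{1}{2}\sum_{l=1}^p \nabla^G_I(\sigma_l),\{\sigma_1,\dots,\sigma_p\},G_I\right)$, i.e.\ they have the same solution $X_t$ for each initial condition. Next I would invoke fact (ii) to re-read this latter Intrinsic SDE as the standard Ito SDE $\left(V + \tfrac{1}{2}\sum_{l=1}^p \nabla^G_I(\sigma_l),\{\sigma_1,\dots,\sigma_p\}\right)$.

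Finally I would apply the Belopolskya--Daletskii formula from \cite{gliklikh2011global}, which for a standard Ito SDE $\left(\tilde{V},\{\sigma_1,\dots,\sigma_p\}\right)$ reads $dX_t = exp_{X_t}\!\left(\tilde{V}(X_t)\,dt + \sum_{l=1}^p \sigma_l(X_t)\,dW^l_t\right)$ with the exponential map of the connection; substituting $\tilde{V} = V + \tfrac{1}{2}\sum_{l=1}^p \nabla^G_I(\sigma_l)$ produces exactly the claimed equation. Since every step is an equivalence of SDEs (equality of solutions), the Belopolskya--Daletskii SDE obtained at the end has the same solution as the original Intrinsic SDE, which is what is asserted.

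The only genuinely delicate point is the transitivity of ``equivalence'' through these reformulations: one must be sure that ``same solution'' is the notion being propagated at each stage (the Intrinsic-to-standard-Ito identification, the drift-correction lemma, and Gliklikh's representation all express solution-level equality rather than a mere formal correspondence), and that the drift correction $\tfrac12\sum_l\nabla^G_I(\sigma_l)$ is a legitimate smooth vector field so that the standard Ito SDE and hence its Belopolskya--Daletskii form are well posed. I expect this bookkeeping, rather than any new computation, to be the main obstacle; the algebra is entirely subsumed in the lemmas already proved.
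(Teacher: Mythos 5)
Your proposal is correct and follows essentially the same route as the paper, which states this lemma without a separate proof precisely because it is the concatenation of the preceding equivalence lemma specialized to $G_\alpha = G_I$, the identification of $(V,\{\sigma_1,\dots,\sigma_p\},G_I)$ with the standard Ito SDE, and the Belopolskya--Daletskii representation quoted from \cite{gliklikh2011global}. The bookkeeping concern you raise (that each step is a solution-level equivalence) is legitimate but is exactly what the paper's preceding discussion is meant to supply.
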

This lemma allows us to take advantage of the underlying exponential map for numerical computations. e.g. a first order numerical method can be given by,
\begin{equation}
\label{equation: BD form _iteration}
X_{t+\delta t} = exp_{X_t}(Y_{t+\delta t} - Y_t),
\end{equation}
where $Y_t$ is a stochastic process in the tangent space $T_{X_t}M$ such that,
\begin{equation}
\label{equation: BD form in tangent space.}
dY_s = \left[V(X_t) + \dfrac{1}{2}\sum_{l = 1}^p \nabla^G_I(\sigma_l(X_t))\right]ds + \sum_{l=1}^p\sigma_l(X_t) dW^l_s.
\end{equation}
Instead of converting Intrinsic SDE into Belopolskya-Daletskii form, one may also choose to convert the Intrinsic SDE into a Stratonovich SDE and use numerical methods from \cite{malham2008stochastic}. In literature one finds many numerical methods for Stratonovich SDEs on manifolds, e.g. in \cite{malham2008stochastic,castell1995efficient}. Alternatively, the option of numerical computations in local chart is always available.
\subsection{Extended Ito formulae on manifolds.}
\label{section:extended_ito}
Let us consider a function $F:\mathbb{R}\times \mathbb{R}^n\to \mathbb{R}^m$ such that for constant $x$
\[\delta F(t,x) = V(F(t,x)) dt + \sum_{l=1}^p \sigma_l(F(t,x))\circ dW^l_t.\]
If $\delta X_t = a(t)dt + \sum_{l=1}^p B_l(t) \circ dW^l_t$, then
\[\delta F(t,X_t) = \left[ V(F(t,X_t)) + D_2F(t,X_t)a(t) \right] dt \]
\begin{equation}
+ \sum_{l=1}^p\left[ \sigma_l(F(t,X_t))+  D_2F(t,X_t)B_l(t)\right] \circ dW^l_t.
\end{equation}
Instead of Stratonovich representation, if the SDEs are given in Ito form then the \textit{extended Ito formula} gives the Ito SDE for $F(t,X_t)$. This formula is also known as \textit{generalized Ito formula} or \textit{Ito-Wentzell's formula}. Readers can refer \cite{kunita1981some, KUNITA1984249, kunita2006decomposition} for further details on the formula. As per this formula, if
\[dF(t,x) = V(F(t,x)) dt + \sum_{l=1}^p \sigma_l(F(t,x)) dW^l_t,\]
and \[dX_t = a(t)dt + \sum_{l=1}^p B_l(t) dW^l_t,\]
then 
\begin{multline}
d(F(t,X_t)) =  \left[V(F(t,X_t)) + D_2F(t,X_t) a(t) \right] dt\\ + \left[\dfrac{1}{2}\sum_{l=1}^p B^T_l(t)D^2_2F(t,X_t) B_l(t)\right] dt \\+\left[ \dfrac{1}{2}\sum_{l=1}^p B^T_l(t)D_2F(t,X_t) D\sigma_l(F(t,X_t))\right] dt \\+ \left[\sum_{l=1}^p \sigma_l(F(t,X_t))+ D_2F(t,X_t)B_l(t)\right] dW^l_t.
\end{multline}
In this section we give an equivalent formula for Intrinsic SDEs on manifolds. For this, let us consider $F:\mathbb{R}\times M\to N$,
such that $F(t,x)$ is a semi-martingale for every $x\in M$,
If $X_t$ is a semi-martingale on $M$, then the following proposition gives the generalized formula for the semi-martingale $F(t,X_t)$.

\begin{proposition}
\label{proposition:gen_ito_formula}
Let $F:\mathbb{R}\times M\to N$,
such that for constant $x$,
\begin{equation}
\mathbf{d}(F(t,x)) = \left[V(F(t,x)) + \dfrac{1}{2} \sum_{l = 1}^p {}^N G(\sigma_l (F(t,x)))\right]dt + \sum_{l=1}^p \sigma_l (F(t,x)) dW^l_t,
\end{equation}
where the manifolds $M$ and $N$ are equipped with diffusion generators ${}^M G$ and ${}^N G$ respectively. Let $X_t$ be a semi-martingale on $M$, with intrinsic representation as
\[\mathbf{d}X_t = \left[a(t) + \dfrac{1}{2} \sum_{l = 1}^p {}^MG(B_l(t))\right]dt + \sum_{l=1}^p B_l(t) dW^l_t.\]
Then,
\begin{multline}
\label{equation: extended_Ito_intrinsic}
\mathbf{d}(F(t,X_t))= \left[ V(F(t,X_t)) + D_2F(t,X_t)a(t) + \dfrac{1}{2} \sum_{l = 1}^p \nabla^{{}^N G}_S(\sigma_l (F(t,X_t))) \right]  dt\\+ \dfrac{1}{2} \sum_{l = 1}^p {{}^N G_S}(\sigma_l (F(t,X_t)) +D_2F(t,X_t)B_l(t))dt\\
+ \dfrac{1}{2} \sum_{l = 1}^p D_2F(t,X_t) \nabla^{{}^M G}_S(B_l(t)) dt +\sum_{l=1}^p \left[ \sigma_l (F(t,X_t)) + D_2F(t,X_t)B_l(t)\right]dW^l_t.
\end{multline}
\end{proposition}
\begin{proof}
In Stratonovich representation 
\[\delta (F(t,x)) = \left[V(F(t,x)) + \dfrac{1}{2} \sum_{l = 1}^p \nabla^{{}^N G}_S(\sigma_l (F(t,x)))\right]dt + \sum_{l=1}^p \sigma_l (F(t,x)) \circ dW^l_t,\]
and 
\[\delta X_t = \left[a(t) + \dfrac{1}{2} \sum_{l = 1}^p \nabla^{{}^M G}_S(B_l(t))\right]dt + \sum_{l=1}^p B_l(t)\circ dW^l_t.\]
Therefore,
\[\delta F(t,X_t) = \left[V(F(t,x)) + \dfrac{1}{2} \sum_{l = 1}^p \nabla^{{}^N G}_S(\sigma_l (F(t,x)))\right]_{x = X_t} dt\]
\[+ \sum_{l=1}^p \sigma_l (F(t,x))\big\vert_{x = X_t} \circ dW^l_t + D_2F(t,X_t) \circ \delta X_t\]
\[= \left[ V(F(t,X_t)) + D_2F(t,X_t)a(t) + \dfrac{1}{2} \sum_{l = 1}^p \nabla^{{}^N G}_S(\sigma_l (F(t,X_t)))\right] dt\] \[+ \dfrac{1}{2} \sum_{l = 1}^pD_2F(t,X_t) \nabla^{{}^M G}_S(B_l(t)) dt+\sum_{l=1}^p \left[ \sigma_l (F(t,X_t)) + D_2F(t,X_t)B_l(t)\right] \circ dW^l_t.\]
Converting it back into the Intrinsic representation, 
\[\mathbf{d}(F(t,X_t))= \left[ V(F(t,X_t)) + D_2F(t,X_t)a(t) + \dfrac{1}{2} \sum_{l = 1}^p \nabla^{{}^N G}_S(\sigma_l (F(t,X_t)))\right] dt\]\[+ \dfrac{1}{2} \sum_{l = 1}^p {{}^N G_S}(\sigma_l(F(t,X_t)) +D_2F(t,X_t)B_l(t)) dt\]
\[ + \dfrac{1}{2} \sum_{l = 1}^pD_2F(t,X_t) \nabla^{{}^M G}_S(B_l(t)) dt +\sum_{l=1}^p \left[ \sigma_l (F(t,X_t)) + D_2F(t,X_t)B_l(t)\right]dW^l_t.\]
\end{proof}
Equation \eqref{equation: extended_Ito_intrinsic} is the extended Ito formula on manifolds when the semi-martingale $X_t\in M$ is in the intrinsic SDE representation. If $X_t\in M$ is given as a Stratonovich SDE, then the extended Ito formula on manifolds is given by equation \eqref{equation:extended_Ito_stratonovich} in the following statement.
\begin{corollary_p}
Let $F:\mathbb{R}\times M\to N$,
such that for constant $x$,
\begin{equation}
\mathbf{d}(F(t,x)) = \left[V(F(t,x)) + \dfrac{1}{2} \sum_{l = 1}^p {}^N G(\sigma_l (F(t,x)))\right]dt + \sum_{l=1}^p \sigma_l (F(t,x)) dW^l_t,
\end{equation}
where the manifolds $N$ is equipped with diffusion generator ${}^N G$. Let $X_t$ be a semi-martingale on $M$, with Stratonovich representation as
\[\delta X_t = a(t)dt + \sum_{l=1}^p B_l(t)\circ dW^l_t.\]
Then,
\begin{multline}
\label{equation:extended_Ito_stratonovich}
\mathbf{d}(F(t,X_t))= \left[ V(F(t,X_t)) + D_2F(t,X_t)a(t) + \dfrac{1}{2} \sum_{l = 1}^p \nabla^{{}^N G}_S(\sigma_l (F(t,X_t))) \right]  dt\\+ \dfrac{1}{2} \sum_{l = 1}^p {{}^N G_S}(\sigma_l (F(t,X_t)) +D_2F(t,X_t)B_l(t))dt+\sum_{l=1}^p \left[ \sigma_l (F(t,X_t)) + D_2F(t,X_t)B_l(t)\right]dW^l_t.
\end{multline}
\end{corollary_p}
\begin{proof}
The intrinsic representation of $X_t$ is obtained using the Stratonovich diffusion generator ${}^MG_S$. In proposition \ref{proposition:gen_ito_formula} if we consider ${}^MG_S$ as the diffusion generator on $M$, then $\nabla^{{}^M G}_S = {}^MG_S - {}^MG_S = 0$.
\end{proof}
\section{Concluding remarks.}
We have shown that the SDE obtained using the diffusion generator constructed using the first order differential equation is nothing but the Intrinsic representation of the Stratonovich SDE. Therefore, in case of Stratonovich SDE, i.e. in the case of SDE obtained using the diffusion generator constructed using the first order differential equation, we do not require additional information about the state space $M$. On the other hand, for the second order differential equation, we need an additional structure on the state space $M$. If this information is available in terms of a connection, then the resulting SDE is the traditional Ito SDE on manifolds (we have called this the standard Ito SDE). We have shown that this additional structure/information on $M$ can also be provided using a regular Lagrangian. We have considered the Hamiltonian dynamics on the tangent bundle $TM$ to construct the diffusion generator. On the Riemannian manifold, if the Lagrangian is kinetic energy then the resulting SDE is, again, the Intrinsic representation of the standard Ito SDE. On the basis of the way of construction of the diffusion generator, we can classify the SDEs on manifolds as shown in the following flowchart.
\begin{center}
\begin{tikzpicture}[node distance=2cm]
\node (start) [startstop] {{SDEs on Manifolds}};
\node (start1) [process, below of=start, xshift=-4.5cm] {{Intrinsic SDEs using diffusion generator (DG)}};
\node (base2) [starts, below of=start, xshift=0cm] {{Stratonovich SDEs}};
\node (base3) [starts, below of=start, xshift=+3.25cm] {{Standard Ito SDEs}};
\node (pro1) [process, below of=start1, xshift=0cm] {{DG constructed using first order ODE} (same as {Stratonovich SDE})};
\node (pro2) [process, below of=start1, xshift=+5.5cm] {{DG constructed using second order ODE}};
\node (pro3) [process, below of=pro2, xshift=-4.5cm] {Second order ODE using {regular Lagrangian} (e.g. Kinetic energy as the Lagrangian gives the {standard Ito SDE})};
\node (pro4) [process, below of=pro2, xshift=+1cm] {Second order ODE using {covariant derivative} (e.g. geodesic equation gives the {standard Ito SDE})};
\draw [arrow] (start) -- (start1);
\draw [arrow] (start) -- (base2);
\draw [arrow] (start) -- (base3);
\draw [arrow] (start1) -- (pro1);
\draw [arrow] (start1) -- (pro2);
\draw [arrow] (pro2) -- (pro3);
\draw [arrow] (pro2) -- (pro4);
\end{tikzpicture}
\end{center}

The advantage of the diffusion generator approach is that it makes the co-ordinate invariant analysis of SDE on manifolds easier. This was demonstrated in section \ref{section:last}, wherein we have derived a formulae to convert a given Intrinsic SDE into an SDE obtained using a different diffusion generator. Using the conversion formulae, we have derived extended/generalized Ito formulae on manifolds.

\appendix
\section{Proof that equation \eqref{equation: SDE_on mani} is co-ordinate invariant.}
\label{appendix: proof of co-ordinate invariance.}
Suppose for vector field $\sigma_l\in \mathfrak{X}(M)$, locally in chart $(U,\chi)$ with coordinates $(x^1, x^2, ..., x^n)$, the diffusor $\alpha_l = G(\sigma_l)$ is given as
$\tilde{\alpha_l} = G(\sigma_l)|_U = a^i_l\dfrac{\partial}{\partial x^i} + \sigma^i_l \sigma^j_l \dfrac{\partial}{\partial x^i \partial x^j}$. In chart $(U,\chi)$, the left hand side of equation \eqref{equation: SDE_on mani} is given by
\begin{equation}
\mathbf{d}X_t|_U = dX^i_t\dfrac{\partial}{\partial x^i} +\dfrac{1}{2} d[X^i_t,X^j_t]\dfrac{\partial^2}{\partial x^i \partial x^j},
\end{equation}
where $X^i_t = \chi^i(X_t)$. Therefore, in chart $(U,\chi)$, we get the Ito SDEs,
\begin{equation}
dX^i_t = (V^i+ \dfrac{1}{2}\sum_{l = 1}^p a^i_l)dt + \sigma^i_l dW^l_t
\end{equation}
and
\begin{equation}
d[X^i_t,X^j_t] = \sigma^i_l(X_t)\sigma^j_l(X_t) dt.
\end{equation}
Similarly, in chart $(U,\Upsilon)$ with coordinates $(y^1, ..., y^n)$, the Ito SDE is given by
\begin{equation}
d\breve{X}^i_t = (\breve{V}^i+ \dfrac{1}{2}\sum_{l = 1}^p \breve{a}^i_l)dt + \breve{\sigma}^i_l dW^l_t,
\end{equation}
where $\breve{X}^i_t = \Upsilon^i(X_t)$. Moreover, using the change of coordinates formula, we know that $\breve{\alpha}_l= \mathfrak{D}\Upsilon \tilde{\alpha_l}$. Therefore, it can be concluded that 
\begin{equation}
\label{eqaution: drift_vector field transition in diffusion}
\breve{a}_l^i = \dfrac{\partial\Upsilon^i}{\partial x^k} a^k_l + \sigma_l^k\sigma^m_l\dfrac{\partial \Upsilon^i}{\partial x^k \partial x^m}.
\end{equation}

Let the transition map from chart $(U,\chi)$ to $(U,\Upsilon)$ be given by $\Psi = \Upsilon\circ \chi^{-1}: \mathbb{R}^n\to \mathbb{R}^n$. Let the coordinates in the co-domain of the chart $(U,\chi)$ be given by $(\tilde{x}^1, \tilde{x}^2, ..., \tilde{x}^n)$ and let $\tilde{X}_t = (X_t^1, X_t^2, ..., X_t^n)$ i.e., $\tilde{X}_t = \chi(X_t)$.
By Ito's lemma,
\[d(\Psi^i(\tilde{X_t})) = \dfrac{\partial \Psi^i}{\partial \tilde{x}_j}\left( V^j + \dfrac{1}{2}\sum_{l = 1}^p \left(a^j_l(X_t)\right)dt + \sigma^j_l(X_t) dW^l_t\right)\]
\begin{equation}
+ \dfrac{1}{2}\dfrac{\partial^2 \Psi^i}{\partial \tilde{x}^j \partial \tilde{x}^k}\sigma^j_l(X_t)\sigma^k_l(X_t) dt.
\end{equation}
As $\Psi^i(\tilde{X_t}) = \Upsilon^i(X_t)$, if $\breve{X}^i_t = \Upsilon^i(X_t)$, we can rewrite the above equation as
\begin{equation}
\label{equation:Ito_transition_SDE}
d \breve{X}^i_t = \dfrac{\partial \Psi^i}{\partial \tilde{x}_j}\left( V^i + \dfrac{1}{2}\sum_{l = 1}^p \left(a^j_l(X_t)\right)dt + \sigma^j_l(X_t) dW^l_t\right)+ \dfrac{1}{2}\dfrac{\partial^2 \Psi^i}{\partial \tilde{x}^j \partial \tilde{x}^k}\sigma^j_l(X_t)\sigma^k_l(X_t) dt.
\end{equation}
But we know that in chart $(U,\Upsilon)$, the Ito SDE representation for $\breve{X}^i_t = \Upsilon^i(X_t)$ is given by
\begin{equation}
\label{equation:Ito_SDE_in another chart}
d\breve{X}^i_t = \left(\breve{V}^i + \dfrac{1}{2}\sum_{l = 1}^p \breve{a}^i_l(X_t)\right)dt + \breve{\sigma^i_l}(X_t) dW^l_t,
\end{equation}
using equation \eqref{eqaution: drift_vector field transition in diffusion},
\begin{subequations}
\begin{equation}d\breve{X}^i_t = \left(\breve{V}^i + \dfrac{1}{2}\sum_{l = 1}^p\dfrac{\partial\Upsilon^i}{\partial x^k} a^k_l +\dfrac{1}{2}\sum_{l = 1}^p \sigma_l^k\sigma^m_l\dfrac{\partial \Upsilon^i}{\partial x^k \partial x^m}\right)dt + \breve{\sigma^i_l}(X_t) dW^l_t\end{equation}
\begin{equation}= \left(\dfrac{\partial\Upsilon^i}{\partial x^j}V^j + \dfrac{1}{2}\sum_{l = 1}^p\dfrac{\partial\Upsilon^i}{\partial x^k} a^k_l +\dfrac{1}{2}\sum_{l = 1}^p \sigma_l^k\sigma^m_l\dfrac{\partial \Upsilon^i}{\partial x^k \partial x^m}\right)dt + \dfrac{\partial\Upsilon^i}{\partial x^j}{\sigma^j_l}(X_t) dW^l_t.\end{equation}
\end{subequations}
As it is known that $\dfrac{\partial \Upsilon^i}{\partial x^j} = \dfrac{\partial \Psi^i}{\partial \tilde{x}^j}$ and $\dfrac{\partial^2 \Upsilon^i}{\partial x^m \partial x^j} = \dfrac{\partial^2 \Psi^i}{\partial \tilde{x}^m \partial \tilde{x}^j}$, equation \eqref{equation:Ito_transition_SDE} and equation \eqref{equation:Ito_SDE_in another chart} are equivalent. Therefore, equation \eqref{equation: SDE_on mani} is coordinate invariant.

\bibliography{2}

\end{document}